\documentclass[a4paper,12pt]{article}
\usepackage[utf8]{inputenc}
\usepackage{amssymb}
\usepackage{amsmath}
\usepackage{amsthm}
\usepackage{color}
\usepackage{tikz-cd}
\usepackage{tikz}
\usepackage{hyperref}

\setlength{\bigskipamount}{5ex plus1.5ex minus 2ex}
\setlength{\textheight}{23cm} \setlength{\textwidth}{16cm}
\setlength{\hoffset}{-1.3cm} \setlength{\voffset}{-1.8cm}

\newtheorem{lemma}{Lemma}
\newtheorem{corollary}{Corollary}
\newtheorem{proposition}{Proposition}
\newtheorem*{conj}{Conjecture}
\theoremstyle{definition}

\newtheorem{remark}{Remark}

\newtheorem{thmalpha}{Theorem}

\newcommand{\IB}{\mathbb{B}}
\newcommand{\IE}{\mathbb{E}}
\newcommand{\IN}{\mathbb{N}}
\newcommand{\IP}{\mathbb{P}}
\newcommand{\IR}{\mathbb{R}}

\newcommand{\cE}{\mathcal{E}}

\newcommand{\dd}{{\rm d}}

\newcommand{\ro}{\varrho}
\newcommand{\cEpm}{\mathcal{E}_{p,\sigma}^{m}}
\newcommand{\decay}{\mathrm{decay}}

\newcommand{\abs}[1]{\left\vert #1 \right\vert}
\newcommand{\norm}[1]{\left\Vert #1 \right\Vert}

\newcommand{\rad}{\mathrm{rad}}

\definecolor{dgreen}{RGB}{0,160,0}

\linespread{1.3}

\title{Random sections of $\ell_p$-ellipsoids, optimal recovery and Gelfand numbers of diagonal operators} 
\author{Aicke Hinrichs
\footnote{Institute of Analysis, University of Linz, Altenbergerstrasse 69, 4040 Linz, Austria.  \texttt{aicke.hinrichs@jku.at}. }
 , 
Joscha Prochno
\footnote{Faculty of Computer Science and Mathematics, University of Passau, Innstrasse 33, 94032 Passau, Germany.  \texttt{joscha.prochno@uni-passau.de}.}
 \ and 
Mathias Sonnleitner
\footnote{Department of Mathematics and Scientific Computing, University of Graz, Heinrichstrasse 36, 8010 Graz, Austria.  \texttt{mathias.sonnleitner@uni-graz.at}. }
}

\date{}

\begin{document}

\maketitle

\begin{abstract}

	We study the circumradius of a random section of an $\ell_p$-ellipsoid, $0<p\le \infty$, and compare it with the minimal circumradius over all sections with subspaces of the same codimension. Our main result is an upper bound for random sections, which we prove using techniques from asymptotic geometric analysis 
	%Gordon's theorem on subspaces escaping through a mesh
	if $1\le p \le \infty$ and compressed sensing if $0<p \le 1$. This can be interpreted as a bound on the quality of random (Gaussian) information for the recovery of vectors from an $\ell_p$-ellipsoid for which the radius of optimal information is given by the Gelfand numbers of a diagonal operator.
	In the case where the semiaxes decay polynomially and $1\le p\le \infty$, we conjecture that, as the amount of information increases, the radius of random information either decays like the radius of optimal information or is bounded from below by a constant, depending on whether the exponent of decay is larger than the critical value $1-\frac{1}{p}$ or not. If $1\le p\le 2$, we  prove this conjecture by providing a matching lower bound. This extends the recent work of Hinrichs et al. [Random sections of ellipsoids and the power of random information, Trans. Amer. Math. Soc., 2021+] for the case $p=2$. 
\end{abstract}
\medskip

\centerline{\begin{minipage}[hc]{130mm}{
{\em Keywords:} Diagonal operator, Gelfand numbers, Grassmannian manifold, $\ell_p$-ellipsoid, optimal recovery, random section.\\
{\em MSC 2020:}
Primary 52A23, 65Y20, Secondary 60G15.
}
\end{minipage}}
\vspace{1cm}

%\tableofcontents

% % % % % % % % % % % % % % % % % % % % % % % % % % %
\section{Introduction, motivation and main results}
% % % % % % % % % % % % % % % % % % % % % % % % % % %

The diameter of a section of a symmetric convex body $K\subset \IR^m$ with a (random) subspace has been an object of interest at least since the study of Gelfand numbers of operators between finite-dimensional Banach spaces \cite{ GG1984,G81,G83, I1974,Ka1974,Ka1977,Stech1954,Ste1975}. These numbers measure the smallest circumradius of the intersection with a subspace of a fixed (co)dimension. Often we are not in a position to exhibit optimal subspaces and thus it seems reasonable to first try to understand intersections with typical subspaces. Along this way we are led to the study of the diameter of intersections with random subspaces which are uniformly distributed on the Grassmannian manifold with respect to the Haar probability measure. 

Connected to the field of asymptotic geometric analysis, there is a large body of work on this topic
initiated by Giannopoulos and V. D. Milman in \cite{GM1997, GM1998} with particular focus on subspace dimension proportional to the dimension of the body (see also \cite{PT1986}). 
In subsequent work Litvak, Pajor and Tomczak-Jaegermann \cite{LPT06} have shown that on the scale of proportional subspaces typical intersections are not much larger than minimal intersections.
It is important to note that, as pointed out in  \cite[Example 2.2]{GM1997}, one cannot expect these bounds to be sharp in full generality, in particular not for ellipsoids with highly incomparable semi-axes.

Mendelson, Pajor and Tomczak-Jaegermann \cite{MPT2007} studied the intimately related problem of approximate reconstruction of vectors from a symmetric convex body $K\subset \IR^m$ using random Gaussian measurements. Approximation using random information underlies the success of the field of compressed sensing, dealing with the reconstruction of sparse vectors (see, e.g., \cite{Don2006, FR2013}). Somewhat related is the approximation of functions using samples at random points, which is 
studied in the context of learning theory (see, e.g., the book \cite{MRT12}) and information-based complexity \cite{HKNPUsurvey,HKN+2019}. 
It is the latter, specifically the work \cite{HKN+2019}, that serves as further motivation for this paper.
There the effectiveness of Gaussian information for recovering vectors in an ellipsoid in the Euclidean norm has been studied, which is related to the approximation of functions with decaying generalized Fourier coefficients, e.g., Korobov spaces \cite{K1957}. In a more geometric parlance, the main result of \cite{HKN+2019} is that the equivalent problem of determining whether the circumradius of a random section of the ellipsoid is close to minimal has a positive solution depending on the square-summability of its semiaxes. In a nutshell, we seek to extend these results and study random sections of generalized $\ell_p$-ellipsoids. Such ellipsoids have also been studied, for instance, in \cite{JP2021} with focus on the asymptotic volume distribution of sections of such ellipsoids as the dimension of the underlying space tends to infinity, and in \cite{van18} where it is shown that such ellipsoids are examples where Dudley's integral bound for Gaussian processes is not sharp.

% % % % % % % % % % % % % % % % % % % % % % % % % % % % % % % % % % % % % % %
\subsection{Radii of random sections and optimal recovery}
% % % % % % % % % % % % % % % % % % % % % % % % % % % % % % % % % % % % % % % 

We aim at understanding the circumradius, or equivalently the diameter, of random sections of generalized ellipsoids and, in particular, whether it is comparable to the minimal circumradius of all sections of the same dimension or not. Given $0< p\le \infty$ our object of interest is the $\ell_p$-ellipsoid
\[
\cEpm:=\{x\in\IR^m: \|( x_j/\sigma_j )_{j=1}^m \|_p\le 1\}\quad \text{with semiaxes}\quad \sigma_1\ge \cdots \ge \sigma_m >0,
\]
where $\|x\|_p:=(|x_1|^p+\cdots+|x_m|^p)^{1/p}$ and $\|x\|_{\infty}:=\max_{1\leq j \leq m}|x_j|$ for $x=(x_j)_{j=1}^m\in\IR^m$ denote the standard $\ell_p$-(quasi-)norms with unit balls $\IB_p^m$. 
To be precise, if $E$ is a linear subspace of $\IR^m$, we denote the circumradius of the section $\cEpm\cap E$ by
\[
\rad(\cEpm,E):=\sup_{x\in \cEpm\cap E}\|x\|_2.
\]
Note that for $0<p<1$ the set $\cEpm$ is not convex but still the unit ball of a quasi-normed space. Before we present our results, we introduce the closely related problem of recovery using linear information.

Assume we want to learn an unknown $x\in\IR^m$ given the information that $x\in\cEpm$, that is, we have some control over the decay of the coordinates of $x$. Further, suppose we are given the linear information $N_n x\in \IR^n$, where $N_n\in\IR^{n\times m}$ and $n\,(< m)$ can be considerably smaller than $m$. The best we can do using the given knowledge about $x$ can be measured by the worst-case error (also known as the radius of the information $N_n$)
\[
\rad(\cEpm,N_n):=\inf_{\varphi} \sup_{x\in\cEpm}\|x-\varphi(N_n x)\|_2,
\]
where the recovery mapping $\varphi:\IR^n\to \IR^m$ can be an arbitrary mapping allowed to depend on $N_n$. The abuse of notation will be justified in a moment. 
It follows from elementary results (see, e.g., \cite[Lemma 4.3]{NW08}) that, if $\cEpm$ is convex, that is, if $p\ge 1$,
\begin{equation}\label{eq:rad-minwce}
\rad(\cEpm,N_n)=\rad(\cEpm,\ker N_n),	
\end{equation}
where the kernel $\ker N_n$ is an $(m-n)$-dimensional subspace of $\IR^m$ and so of codimension $n$, i.e., it belongs to the Grassmannian manifold $\mathcal{G}_{m,m-n}$, if we assume the rows of $N_n$ to be linearly independent. This is the reason we call $\rad(\cEpm,N_n)$ the radius of the information $N_n$. If $\cEpm$ is non-convex, that is, if $0<p<1$, then \eqref{eq:rad-minwce} holds up to a factor of 2, see, e.g., Lemma~\ref{lem:radius}. Random information will be given by a random matrix $G_{n,m}\in\IR^{n\times m}$ with i.i.d.~standard Gaussian entries. It follows from rotation invariance that the distribution of $\ker G_{n,m}$ is equal to the Haar probability measure on the Grassmannian $\mathcal{G}_{m,m-n}$. Thus, we may define the circumradius of the intersection of $\cEpm$ with a random subspace of codimension $n$ via the random quantity
\[
\rad(\cEpm,G_{n,m})=\sup_{x\in\cEpm\cap E_n}\|x\|_2,
\]
where $E_n := \ker G_{n,m}$. Obviously, $\rad(\cEpm,G_{n,m})\ge \rad(\cEpm,n)$ for each realization with the minimal radius
\[
\rad(\cEpm,n):=\inf_{E_n} \rad(\cEpm,E_n),\qquad n\in\IN,
\]
the infimum ranging over all subspaces $E_n$ of $\IR^m$ of codimension $n$. 
It is natural to ask to what extent the converse inequality holds true or, in other words, what is the radius of a typical intersection of $\cEpm$ by a random subspace?

The special case $p=2$ has been dealt with in \cite{HKN+2019}, where it has been shown that 
\begin{equation}\label{eq:upper-2}
\rad(\mathcal{E}_{2,\sigma}^m,G_{n,m})
\leq C \frac{1}{\sqrt{n}}\Big(\sum_{j\ge \lfloor n/4\rfloor}\sigma_j^2\Big)^{1/2}
\end{equation}
holds with exponentially high probability (in $n$), where $C\in(0,\infty)$ is an absolute constant. Further, if the semiaxes $\sigma=(\sigma_j)_{j\in\IN}$ satisfy $\|\sigma\|_2=\infty$, then, with an absolute constant $C\in(0,\infty)$,
\[
\rad(\mathcal{E}_{2,\sigma}^m,G_{n,m})
\geq C \sigma_1
\]
with exponentially high probability, provided that $m \,(> n)$ is large enough compared to $n$. Since $\rad(\mathcal{E}_{2,\sigma}^m,n)=\sigma_{n+1}$,
there is a dichotomy for the usefulness of Gaussian information compared to optimal information or, in more geometric parlance, the circumradius of a random section compared to the minimal one. We seek to extend this result to the class of $\ell_p$-ellipsoids $\cEpm$ with $p\neq 2$.

% % % % % % % % % % % % % % % % % % % % % % % % % % % % % % % % % % % % % % %
\subsection{Radii of random sections -- main results}
% % % % % % % % % % % % % % % % % % % % % % % % % % % % % % % % % % % % % % %

We are not able to answer the above question in full for the general case, in part due to the fact that the minimal radius $\rad(\cEpm,n)$ is not known exactly for $1\le p<2$, see the \hyperref[sec:diag]{Appendix} for more on optimal sections. This is one of the reasons why, apart from the following two theorems, we also present results for the important case of polynomially decaying semiaxes.

Here and in what follows, for two non-negative reals $a_{\alpha}$ and $b_{\alpha}$ depending on some parameter $\alpha$ from an index set $I$, we write $a_{\alpha}\lesssim b_{\alpha}$, or equivalently $b_{\alpha}\gtrsim a_{\alpha}$, if there exists a constant $C\in (0,\infty)$ such that $a_{\alpha}\leq C\, b_{\alpha}$ for all $\alpha\in I$. If both $a_{\alpha}\lesssim b_{\alpha}$ and $a_{\alpha}\gtrsim b_{\alpha}$ hold, we write $a_{\alpha}\asymp b_{\alpha}$. If the constant may depend on some parameter $\beta$, we shall write $a_{\alpha}\lesssim_{\beta} b_{\alpha}$ and $a_{\alpha}\gtrsim_{\beta} b_{\alpha}$ instead or, if both estimates hold, $a_{\alpha}\asymp_{\beta} b_{\alpha}$. As usual, given $1\leq p\leq\infty$, we shall denote the H\"older conjugate of $p$ by $p^*$ so that $\frac{1}{p}+\frac{1}{p^*}=1$.

The first result provides an upper bound on the radius of random information with high probability and is in the spirit of the results obtained in \cite{HKN+2019}.

\begin{thmalpha}\label{thm:mstar-estimate-rad}
	For all $m\in\IN$ and $1\le n<m$, we have
	\[
	\rad(\mathcal{E}_{p,\sigma}^{m},G_{n,m})
		\lesssim
		\begin{cases}
		n^{-1/2}\sup\limits_{k\le j\le m}\sigma_{j}\sqrt{\log(j)+1} &:\, p=1, \\
		\sqrt{p^*}n^{-1/2} \Big(\sum\limits_{j=k}^{m}\sigma_{j}^{p^{*}}\Big)^{1/p^{*}} &:\, 1<p \leq \infty,
		\end{cases}
    \]
	with probability at least $1-c_1 \exp(-c_2 n)$, where $c_1,c_2\in(0,\infty)$ are absolute constants, and $k\asymp n$ for $p=1$ while $k\asymp \frac{n}{p^*}$ for $p>1$.
\end{thmalpha}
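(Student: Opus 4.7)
The approach is Gordon's \emph{escape through a mesh} principle applied to the ``sphere slice''
\[
S_r := \left\{u \in S^{m-1} : \left\|(u_j/\sigma_j)_{j=1}^m\right\|_p \le 1/r\right\}.
\]
Since $\rad(\cEpm,G_{n,m}) \le r$ is equivalent to $S_r \cap \ker G_{n,m} = \emptyset$, Gordon's theorem produces this disjointness with probability at least $1 - c_1 \exp(-c_2 n)$ as soon as the Gaussian mean width $w(S_r) := \IE \sup_{u \in S_r} \langle h,u\rangle$ (with $h\sim \mathcal{N}(0,I_m)$) is at most a suitable constant multiple of $\sqrt{n}$. The whole task thus reduces to bounding $w(S_r)$ in terms of $r$.

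To exploit the decay of the semiaxes we split the coordinates at a threshold $k$ to be chosen later. Any $u \in S_r$ decomposes as $u = u^{<k} + u^{\ge k}$ into its restrictions to the first $k-1$ and last $m-k+1$ coordinates, respectively, and satisfies both $\|u^{<k}\|_2 \le 1$ and $\|(u_j/\sigma_j)_{j \ge k}\|_p \le 1/r$. This yields the Minkowski inclusion
\[
S_r \ \subseteq\ \IB_2^{k-1} \,+\, \frac{1}{r}\,\mathcal{E}_{p,(\sigma_j)_{j\ge k}}^{m-k+1},
\]
with both summands viewed in $\IR^m$ via canonical zero-extension. Subadditivity of Gaussian width under Minkowski sums, combined with H\"older duality applied to the second summand, gives
\[
w(S_r) \ \le\ \sqrt{k-1} \,+\, \frac{1}{r}\, \IE\big\|(\sigma_j h_j)_{j\ge k}\big\|_{p^*}.
\]
The remaining Gaussian moment is controlled by standard estimates: for $1 < p \le \infty$, Jensen's inequality applied to $t\mapsto|t|^{p^*}$ yields $\IE\|(\sigma_j h_j)_{j \ge k}\|_{p^*} \lesssim \sqrt{p^*}\,\|(\sigma_j)_{j \ge k}\|_{p^*}$, while for $p = 1$ a union bound combined with Gaussian tails and the monotonicity of $(\sigma_j)$ delivers $\IE \max_{j \ge k}\sigma_j |h_j| \lesssim \sup_{j \ge k}\sigma_j\sqrt{\log(j)+1}$.

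Finally we pick $k \asymp n/p^*$ when $1 < p \le \infty$ (respectively $k \asymp n$ when $p = 1$) so that $\sqrt{k-1}$ consumes at most half of the admissible budget $c_0\sqrt{n}$; requiring the second term to occupy the other half is then equivalent to the bound on $r$ stated in the theorem. The main technical point lies in the $p = 1$ estimate for $\IE \max_{j \ge k}\sigma_j |h_j|$: the naive maximum-of-Gaussians bound $\sigma_k\sqrt{\log m}$ is too crude, and the monotonicity of $(\sigma_j)$ has to be exploited to replace $\log m$ by the slot-dependent $\log(j)+1$, which is precisely what is needed to match the dichotomy conjectured in the polynomial-decay regime.
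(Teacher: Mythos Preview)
Your proposal is correct and follows essentially the same route as the paper: Gordon's escape theorem, a coordinate split at index $k$, H\"older duality on the tail block, and the standard Gaussian moment bounds (Jensen for $1<p\le\infty$, the weighted-maximum estimate for $p=1$). The only difference is cosmetic packaging: you apply Gordon's theorem directly to the sphere slice $S_r=\frac{1}{r}\cEpm\cap S^{m-1}$, whereas the paper applies the equivalent $M^*$-estimate form to the rounded body $\cEpm\cap\rho\IB_2^m$ and then uses the self-improvement step ``$\rad(\cEpm\cap\rho\IB_2^m,E_n)<\rho\Rightarrow\rad(\cEpm,E_n)<\rho$'' to remove the truncation; the resulting computations and the choices of $k$ and $r$ (respectively $\rho$) coincide.
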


The proof relies on a famous theorem of Gordon \cite{Gor88} on subspaces escaping through a mesh, for which we first need to control the mean width of `rounded' versions of our ellipsoids arising from intersections with a Euclidean ball of a suitable radius. The idea of cutting away the peaky regions of a convex body in this way to obtain improved bounds on its mean width is well known, see for example \cite[Section 2]{MPT2007}. Then, in order to bound this quantity for the $\ell_p$-ellipsoids, we adapt an approach already used in \cite{HKN+2019}. There the main approach had been a random matrix one, but it seems this approach cannot be adapted to our situation without loosing something compared to Theorem \ref{thm:mstar-estimate-rad}.

\begin{remark}
	Theorem \ref{thm:mstar-estimate-rad} extends the upper bound \eqref{eq:upper-2}. It shows that if $\|\sigma\|_{p^*}<\infty$ we can expect the radius of random sections to decay at least as fast as $n^{-1/2}$. This also gives a bound on the minimal radius, see Corollary~\ref{cor:minrad} formulated in terms of Gelfand numbers.
\end{remark}

\begin{remark}
In the context of suprema of Gaussian processes we want to mention that $\ell_p$-ellipsoids with slowly decaying semiaxes are examples where Dudley's upper bound is loose. This has been observed, for instance, by van Handel in \cite{van18}. For more information, we refer to the discussion at the end of Section \ref{sec:mstar}, where we exhibit a bound also depending on $p$, which is not present in \cite{van18}. 
\end{remark}

Employing methods commonly used in the field of compressed sensing, e.g., in a work of Foucart, Pajor, Rauhut and Ullrich \cite{FPR+10} on the Gelfand widths of $\ell_p$-balls in the quasi-Banach regime $0<p\leq 1$, we deduce the following upper bound for the radius of random information when $0<p\leq 1$ and the semiaxes have polynomial decay.

\begin{thmalpha} \label{thm:radius-quasi}
Let $0<p\leq 1$ and $\sigma_j=j^{-\lambda}$, $j\in\IN$, for some $\lambda>0$. Then there exist constants $C,D\in(0,\infty)$ such that, for all $m\in\IN$ and all $1\le n <m$ with
\[
n\ge D\log({\rm e}m/n),
\]
we have
\[
\rad(\cEpm,\ker G_{n,m})
\lesssim_{p} \Big(\frac{\log({\rm e}m/n)}{n}\Big)^{\lambda+1/p-1/2}
\]
with probability at least $1-2\exp(-C n)$.
\end{thmalpha}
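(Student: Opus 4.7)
The plan is to combine two ingredients: a null space property (NSP) of random Gaussian matrices in the quasi-Banach regime $0<p\le 1$ due to Foucart, Pajor, Rauhut and Ullrich~\cite{FPR+10}, and an \emph{a priori} termwise estimate on the decreasing rearrangement of any vector in $\cEpm$ that encodes the polynomial decay of the semiaxes. Together these reduce the radius bound to a short peeling argument at the sparsity level $s\asymp n/\log({\rm e}m/n)$.

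The first ingredient I would import from~\cite{FPR+10}: for suitable constants $c_p, C, D\in(0,\infty)$ and $\rho\in(0,1)$ depending only on $p$, whenever $n\ge D\log({\rm e}m/n)$ and $s := \lfloor c_p n/\log({\rm e}m/n)\rfloor$, the matrix $G_{n,m}$ satisfies, with probability at least $1-2\exp(-Cn)$, the following $\ell_2$-robust $\ell_p$-null space property of order $s$: for every $v\in\ker G_{n,m}$ and every $S\subset\{1,\dots,m\}$ with $|S|\le s$,
\[
\|v_S\|_2 \;\le\; \rho\, s^{1/2-1/p}\,\|v_{S^c}\|_p.
\]
The rest of the argument is carried out on this event.

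For the second ingredient, since $(\sigma_j)$ is decreasing, the rearrangement inequality shows that for any $x\in\cEpm$ the decreasing rearrangement $x^\ast$ of $(|x_j|)$ also lies in $\cEpm$, so $\sum_j j^{\lambda p}(x^\ast_j)^p\le 1$. Comparison with an integral then yields the termwise bound $x^\ast_k \lesssim_{\lambda,p} k^{-(\lambda+1/p)}$, which is the crucial improvement over the generic $\ell_p$-ball estimate $k^{-1/p}$ and is precisely where the exponent $\lambda$ enters. Given $x\in\cEpm\cap \ker G_{n,m}$, I would then take $S$ to be the support of the $s$ largest entries of $|x|$ and use this decay to obtain
\[
\|x_{S^c}\|_p \;\lesssim_{\lambda,p}\; s^{-\lambda}, \qquad \|x_{S^c}\|_2 \;\lesssim_{\lambda,p}\; s^{-(\lambda+1/p-1/2)}.
\]
Feeding the $\ell_p$-tail bound into the NSP yields $\|x_S\|_2 \lesssim_{\lambda,p} s^{-(\lambda+1/p-1/2)}$; adding the two contributions via the triangle inequality in $\ell_2$ and substituting $s\asymp n/\log({\rm e}m/n)$ delivers the claim.

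The main obstacle is the NSP itself: in the quasi-Banach range $0<p\le 1$ this is a delicate statement, established in~\cite{FPR+10} via Gaussian concentration and the geometry of the $\ell_p$-quasiball. Once taken as a black box, the remainder is a bookkeeping exercise in which the rearrangement estimate supplies the decisive extra factor $s^{-\lambda}$ in both tails, which is absent for the uniform $\ell_p$-ball problem.
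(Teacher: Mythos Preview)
Your proposal is correct and follows essentially the same approach as the paper: both rely on the compressed-sensing machinery from \cite{FPR+10} for Gaussian matrices together with the rearrangement estimate $x^\ast_k \lesssim_{p,\lambda} k^{-(\lambda+1/p)}$ (the paper's Lemma~\ref{lem:nonlin-app}). The only difference is packaging: the paper routes through the $\ell_p$-minimization map $\Delta_p$ and Lemma~\ref{lem:radius} to bound the radius by $\sup_{x\in\cEpm}\|x-\Delta_p(N_n x)\|_2$, then invokes the RIP-based recovery bound (3.5)--(3.6) of \cite{FPR+10}, whereas you apply the null space property directly to vectors in $\ker G_{n,m}\cap\cEpm$ and split $\|x\|_2\le \|x_S\|_2+\|x_{S^c}\|_2$; since RIP implies NSP and the recovery guarantee in \cite{FPR+10} is itself proved via this NSP-type inequality, your route simply skips one layer of indirection.
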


In fact, we shall prove a slightly more general result, Theorem \ref{thm:radius-quasi-general} below, which also yields new bounds on Gelfand numbers of diagonal operators in the quasi-Banach regime (see Corollary~\ref{cor:gelfand-quasi} in the \hyperref[sec:diag]{Appendix}).

\subsection{The power of random information for polynomial decay -- discussion}
\label{sec:poly}

In the following we discuss the consequences of our results for ellipsoids with polynomially decaying semiaxes $\sigma_j=j^{-\lambda}$, $j\in\IN$, for some $\lambda>0$. It turns out that, at least when $1\le p\le 2$, we can show a dichotomy for the radius of a random section in comparison to the minimal section. Roughly speaking, we have the following equivalence, which will be made precise by the conjecture at the end of this subsection,
\[
\rad(\cEpm,G_{n,m})
\asymp
\begin{cases}
	\rad(\cEpm,n)&:\lambda>\frac{1}{p^*},\\
	\rad(\cEpm,0)&:\lambda< \frac{1}{p^*}.
\end{cases}
\]

To illustrate this, we first provide known results on the minimal radius $\rad(\cEpm,n)$, which we deduce from results on Gelfand numbers of diagonal operators (see the \hyperref[sec:diag]{Appendix} for the latter). 

Let $1\le p\le \infty$. The behavior of the minimal radius is known exactly when $p\ge 2$ but can only be deduced up to subpolynomial factors when $1\le p<2$. 
To make this precise, we define the rate of polynomial decay (in $n$) of an infinite array $a=(a_{n,m})_{m\in\IN,1\le n <m}$ of real numbers by
\[
\decay(a)
:=\sup\{\ro\ge 0: \exists C\in (0,\infty) \text{ with }a_{n,m}\le C n^{-\ro} \text{ for all }m\in\IN \text{ and }1\le n<m\}.
\]
Now, let $\frac{1}{s}:=(\frac{1}{2}-\frac{1}{p})_+:=\max\{\frac{1}{2}-\frac{1}{p},0\}$ and $\sigma_j=j^{-\lambda}$, $j\in\IN$, for some $\lambda>\frac{1}{s}$. The minimal radius does not decay if $\lambda\le \frac{1}{s}$. We have, see Corollary~\ref{cor:min-order},
\begin{equation}\label{eq:min-order}
\decay(\rad(\cEpm,n))
=	\begin{cases}
		\lambda\cdot\frac{p^{*}}{2}&:\, 1\le p<2 \text{ and }\lambda<\frac{1}{p^{*}},\\
		\lambda+\frac{1}{p}-\frac{1}{2}&:\,\text{otherwise.}
	\end{cases}
\end{equation}

We can now compare this with our bounds on the radii of random sections. Theorem~\ref{thm:mstar-estimate-rad} gives for the above choice of $\sigma$, for any $\lambda>\frac{1}{p^*}$, $m\in\IN$ and $1\le n<m$, 
\begin{equation}\label{eq:bound polynomial}
\rad(\cEpm,G_{n,m})
\lesssim_{p,\lambda}
\begin{cases}
	n^{-\lambda-1/2}\sqrt{\log n}:&p=1,\\
	n^{-\lambda+1/2-1/p}:&1<p\leq \infty,
\end{cases}
\end{equation}
with probability $1-c_1\exp(-c_2 n)$. 
This means, if $1\le p\le\infty$ and $\lambda>\frac{1}{p^*}$, then the polynomial decay rate of random information is, by \eqref{eq:min-order}, equal to
\begin{equation}\label{eq:ran-order}
	%\varrho_{\rm diag}^{\rm ran}(\lambda,p):=
	%\sup\{\varrho\ge 0: \rad(\cEpm,G_{n,m})\lesssim n^{-\varrho}\,\text{for all }m\ge n\}
	\decay(\rad(\cEpm,G_{n,m}))
=	\lambda+\frac{1}{p}-\frac{1}{2} 
=	\decay(\rad(\cEpm,n)).
\end{equation}
Similar to the above, $\decay(\rad(\cEpm,G_{n,m}))$ is defined to be the supremum over all $\ro\ge 0$ such there exist $C,C_1,C_2\in (0,\infty) $ such that
$
\rad(\cEpm,G_{n,m})\le C n^{-\ro}
$
holds with probability at least $1-C_1\exp(-C_2n)$ for all $m\in\IN$ and $1\le n <m$.
Thus, the bound of Theorem~\ref{thm:mstar-estimate-rad} on the decay rate is optimal for $\lambda>\frac{1}{p^*}$. If $\lambda\le \frac{1}{p^*}$, however, it does not yield a useful result. Instead, we have a lower bound on the radius of random information, Proposition~\ref{pro:lower} below, which shows that if $1< p\le 2$ the radius of random information does not decay if $m$ is large enough.

\begin{proposition}\label{pro:lower}
	Let $1<p\le 2$ and $\sigma_j=j^{-\lambda},j\in\IN$, for some $\lambda$ with $0<\lambda<\frac{1}{p^*}$. Then, for any $\varepsilon\in (0,1),$ $n\in\IN$ and $m>n$ large enough, we have
\[
	\IP\Big[\rad( \cEpm,G_{n,m})\ge \frac{1}{2}\Big]
	\ge 1-\varepsilon.
\]
\end{proposition}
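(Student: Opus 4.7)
The plan is to exhibit, with probability at least $1-\varepsilon$, an explicit vector in $\cEpm\cap\ker G_{n,m}$ of Euclidean norm at least $1/2$. Denote the columns of $G_{n,m}$ by $G_1,\ldots,G_m\in\IR^n$ and set $G_{\ge 2}:=(G_2\mid\cdots\mid G_m)$. For $m>n+2$ the Gram matrix $G_{\ge 2}G_{\ge 2}^T$ is almost surely invertible, so I define the minimum-$\ell_2$-norm solution
\[
z^*:=-G_{\ge 2}^T\bigl(G_{\ge 2}G_{\ge 2}^T\bigr)^{-1}G_1\in\IR^{m-1}
\]
of $G_{\ge 2}z^*=-G_1$ and put $z:=(1,z_2^*,\ldots,z_m^*)^T\in\IR^m$. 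By construction $G_{n,m}z=G_1+G_{\ge 2}z^*=0$ and $\|z\|_2\ge|z_1|=1$. It will suffice to show $\|z/\sigma\|_p^p\le 2$ with probability at least $1-\varepsilon$, because the renormalization $\tilde z:=z/\|z/\sigma\|_p$ lies in $\cEpm\cap\ker G_{n,m}$ and satisfies $\|\tilde z\|_2\ge 1/2^{1/p}\ge 1/2$ since $p\ge 1$.

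To control $\|z^*/\sigma\|_p$ I would use that $G_1\sim N(0,I_n)$ is independent of $G_{\ge 2}$. The identity $\|z^*\|_2^2=G_1^T(G_{\ge 2}G_{\ge 2}^T)^{-1}G_1$ combined with the standard inverse-Wishart moment formula $\IE[(G_{\ge 2}G_{\ge 2}^T)^{-1}]=I_n/(m-n-2)$ (valid for $m>n+2$) yields
\[
\IE\|z^*\|_2^2=\frac{n}{m-n-2}\asymp\frac{n}{m}.
\]
Because $G_2,\ldots,G_m$ are i.i.d.\ and the Moore--Penrose pseudoinverse is equivariant under column permutations of $G_{\ge 2}$, the vector $(z_2^*,\ldots,z_m^*)$ is exchangeable, so $\IE|z_j^*|^2\asymp n/m^2$ uniformly in $j\ge 2$. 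For $1<p\le 2$ Jensen's inequality gives $\IE|z_j^*|^p\le(\IE|z_j^*|^2)^{p/2}\lesssim(n/m^2)^{p/2}$, and summing against the weights $\sigma_j^{-p}=j^{\lambda p}$ via the elementary bound $\sum_{j=1}^m j^{\lambda p}\lesssim_{\lambda,p}m^{\lambda p+1}$ yields
\[
\IE\|z^*/\sigma\|_p^p\lesssim_{\lambda,p}n^{p/2}\,m^{\lambda p+1-p}=n^{p/2}\,m^{p(\lambda-1/p^*)}.
\]

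The hypothesis $\lambda<1/p^*=1-1/p$ is exactly what makes the exponent of $m$ strictly negative, so for each fixed $n\in\IN$ the right-hand side tends to $0$ as $m\to\infty$. Therefore, for $m$ large enough depending on $n$, $p$, $\lambda$ and $\varepsilon$, Markov's inequality gives $\IP[\|z^*/\sigma\|_p^p\ge 1]\le\varepsilon$; on the complementary event $\|z/\sigma\|_p^p=1+\|z^*/\sigma\|_p^p\le 2$, and the renormalization step from the first paragraph completes the proof. I expect the main technical step to be the exchangeability argument that promotes the global identity $\IE\|z^*\|_2^2=n/(m-n-2)$ to the uniform coordinate bound $\IE|z_j^*|^2\asymp n/m^2$, since this is what allows the polynomial sum $\sum_j j^{\lambda p}$ to interact cleanly with the $\ell_2$-second-moment estimate and produces $1/p^*$ as the critical exponent.
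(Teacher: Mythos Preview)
Your proof is correct. Both your argument and the paper's exhibit essentially the same vector in $\ker G_{n,m}$: the kernel element with first coordinate equal to $1$ and minimal Euclidean norm is (after normalization) exactly the unit kernel vector maximizing $|x_1|$, which is what the paper uses via its Lemma~\ref{lem:large-coord} (quoted from \cite{HKN+2019}).

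The routes diverge in how the tail $(z_2,\dots,z_m)$ is controlled. The paper applies Lemma~\ref{lem:large-coord} to get the high-probability bound $\sum_{j\ge 2}x_j^2=1-x_1^2\le n/(\varepsilon m)$, then uses the crude weight replacement $\sigma_j^{-1}\le\sigma_m^{-1}$ together with H\"older's inequality $\|\cdot\|_p\le m^{1/p-1/2}\|\cdot\|_2$. You instead derive the coordinate-wise second moments directly from the inverse-Wishart identity and exchangeability, pass to $p$th moments via Jensen, sum against the true weights $j^{\lambda p}$, and finish with Markov. Both computations collapse to the same exponent $p(\lambda-1/p^*)$ of $m$, so neither gains quantitatively over the other for polynomial semiaxes.

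What each approach buys: the paper's version is shorter once Lemma~\ref{lem:large-coord} is available and immediately yields the general statement of Proposition~\ref{pro:lower-general} (arbitrary monotone $\sigma$ under the condition $n\le\varepsilon\sigma_m^2 m^{2/p^*}$). Your version is self-contained---it effectively reproves Lemma~\ref{lem:large-coord} inside the argument---and the exchangeability step produces the slightly sharper intermediate bound $(n/m^2)^{p/2}\sum_{j\le m}\sigma_j^{-p}$ in place of $(n/m)^{p/2}m^{1-p/2}\sigma_m^{-p}$, which could matter for sequences $\sigma$ where $\sum_{j\le m}\sigma_j^{-p}\ll m\sigma_m^{-p}$, though not in the polynomial case treated here.
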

 
In other words, if $1< p\le 2$ and the semiaxes decay too slowly compared to $\frac{1}{p^*}$, random information is asymptotically as good as no information at all. 

\begin{remark}
	The boundary case $\lambda=\frac{1}{p^*}$ is not covered by Proposition~\ref{pro:lower}. However, its statement remains true if $\sigma_j=j^{-1/p^*}a_j, j\in\IN,$ with $a_j\to \infty$ as $j\to \infty$. This can be deduced from Proposition~\ref{pro:lower-general} in Section~\ref{sec:lower}, from which Proposition~\ref{pro:lower} follows.
\end{remark}

We obtain the following corollary on the polynomial decay.

\begin{corollary}\label{cor:lower}
	Let $1\le p\le 2$ and $\sigma_j=j^{-\lambda},j\in\IN$, for some $\lambda$ with $0<\lambda<\frac{1}{p^*}$. Then
\[
\decay(\rad(\cEpm,G_{n,m}))=0.
\]
\end{corollary}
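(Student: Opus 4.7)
\medskip
\noindent\textbf{Proof proposal.} The plan is to observe first that the statement is vacuous when $p=1$, since then $p^*=\infty$ and the open interval $(0,1/p^*) = (0,0)$ is empty. So we only need to address the range $1<p\le 2$, and in this range the whole point will be to convert the lower bound of Proposition~\ref{pro:lower} (which is uniform in $m$ for $m$ large) into a statement about the polynomial decay rate along the full array indexed by $(n,m)$ with $m>n$.

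Unfolding the definition of $\decay$, we want to rule out the existence of $\rho>0$ together with constants $C,C_1,C_2\in(0,\infty)$ such that
\[
\rad(\cEpm,G_{n,m})\le C n^{-\rho}
\quad\text{with probability at least}\quad 1-C_1\exp(-C_2 n)
\]
for every pair $m\in\IN$, $1\le n<m$. Suppose, aiming at a contradiction, that such $\rho$ and constants exist. Fix any $n$ large enough that simultaneously $Cn^{-\rho}<1/2$ and $C_1\exp(-C_2 n)<1/4$. Then for every $m>n$,
\[
\IP\Big[\rad(\cEpm,G_{n,m})<\tfrac{1}{2}\Big]\ge 3/4.
\]
On the other hand, apply Proposition~\ref{pro:lower} with this $n$ and the choice $\varepsilon=1/4$: for all $m$ sufficiently large (depending on $n$) we obtain
\[
\IP\Big[\rad(\cEpm,G_{n,m})\ge \tfrac{1}{2}\Big]\ge 3/4.
\]
The two events are complementary, so the sum of their probabilities is exactly~$1$, contradicting the estimate $3/4+3/4>1$. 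Hence no such $\rho>0$ exists, which gives $\decay(\rad(\cEpm,G_{n,m}))=0$ for $1<p\le 2$, and together with the vacuous case $p=1$ proves the corollary.

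No essential new difficulty arises: the work has already been done in Proposition~\ref{pro:lower}, and the only thing to check is that its freedom in $m$ is precisely what is needed to defeat any hypothetical polynomial-in-$n$ decay rate. The main (minor) subtlety to be careful with is the order of quantifiers in the definition of $\decay$: the probability bound has to hold for every pair $(n,m)$ with $n<m$, so the adversary can choose $m$ as large as they please after $n$ is fixed, which is exactly the regime in which Proposition~\ref{pro:lower} operates.
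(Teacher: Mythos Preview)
Your proof is correct and follows exactly the route the paper intends: the corollary is stated immediately after Proposition~\ref{pro:lower} with no separate proof, and your argument spells out the obvious contradiction between a hypothetical decay rate $\rho>0$ and the uniform-in-$m$ lower bound from that proposition. The observation that the case $p=1$ is vacuous is also correct and in line with the paper's convention $1/p^*=0$.
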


We visualize the $\decay(\rad(\cEpm,G_{n,m}))$ for $0< p,\lambda\le \infty$ and $\sigma_j=j^{-\lambda},j\in\IN,$ in the following diagram, where the horizontal-axis displays $\frac{1}{p}$ and the vertical-axis $\lambda$. 
\begin{figure}[h]
\begin{center}
	\begin{tikzpicture}[scale=5]
%		\fill[green!50!white] (0,0) rectangle (0.35,.35);
%		\fill[green!50!white] (0.35,0) -- (.35,.35) -- (0.7,0) -- (0.35,0);
		\fill[red!50!white] (0,0) -- (0,.35) -- (0.35,0) -- (0,0);
		\draw[->] (0,-.1) -- (0,1) node [midway,xshift=-10pt,yshift=-10pt]{$\frac{1}{2}$} node [near end,xshift=-10pt,yshift=0pt]{$1$} node [at end,xshift=-10pt,yshift=0pt]{$\lambda$} ; 	
		\draw[->] (-.1,0) -- (1,0) node [midway,yshift=-10pt,xshift=-10pt]{$\frac{1}{2}$} node [near end,yshift=-10pt,xshift=0pt]{$1$} node [at end,xshift=3pt,yshift=-10pt]{$\frac{1}{p}$}; 	
		\draw (0,.7) -- (.7,0); 	
		\draw (0,0) rectangle (.35,.35); 	
		\draw[dashed] (0,.35) -- (.35,0); 	
		\draw[dashed] (.7,0) -- (.7,1) node [at end,yshift=5pt]{\small{$p=1$}}; 	
		\node at (.35,.75) {$\lambda+\frac{1}{p}-\frac{1}{2}$};
		\node at (.12,.44) {$?$};		
		\node at (.27,.27) {0};
		\node at (.47,.10) {0};
		\clip (0,0) rectangle (0.35,.35);
	\end{tikzpicture}
\end{center}
\end{figure}

Above the line $\lambda=1-\frac{1}{p}$, where $ 1\le p\le\infty,$ we just deduced that Theorem \ref{thm:mstar-estimate-rad} yields that random information is optimal up to an additional logarithmic factor if $p=1$. The decay rate is equal to $\lambda+\frac{1}{p}-\frac{1}{2}$, see \eqref{eq:ran-order}. As noted above, below and including the line $\lambda=\frac{1}{2}-\frac{1}{p}$, where $ 2\le p\le\infty,$ optimal information does not decay at all, in other words, information is useless and does not help to recover vectors. Geometrically, this corresponds to the fact that, no matter how large the codimension $n(<m)$ of a subspace is, the section with $\cEpm$ has a radius bounded from below. 

In the square, that is for $p\ge 2$ and $\lambda\le\frac{1}{2}$, it follows from Theorem 5 in \cite{HKN+2019} that, no matter how large we choose $n$, if $m$ is large enough, then with high probability $\rad(\cEpm,G_{n,m})$ is bounded below by a constant.  That is, $\decay(\rad(\cEpm,G_{n,m}))=0$ and so random information is useless. By Corollary~\ref{cor:lower} this also holds for the triangle given by $1< p<2$ and $0<\lambda<1-\frac{1}{p}$.

Finally, on the right-hand side of the dashed line where $p=1$, that is, where $0<p<1$, Theorem \ref{thm:radius-quasi} provides an upper bound with decay rate $\lambda+\frac{1}{p}-\frac{1}{2}$, which depends on $m$. We do not have a corresponding lower bound for optimal information in this region. 

We pose the following conjecture claiming that there is a threshold of decay separating regimes of completely different behavior of random information.

\begin{conj}\label{conj:threshold}
	Let $1\le p\le\infty$ and $\sigma_j=j^{-\lambda},j\in\IN,$ with $\lambda>\frac{1}{s}=(\frac{1}{2}-\frac{1}{p})_+$. Then, 
\[
\decay(\rad(\cEpm,G_{n,m}))=
\begin{cases}
	\decay(\rad(\cEpm,n))
&:\lambda>\frac{1}{p^*},\\
	0&:\lambda\le\frac{1}{p^*}.
\end{cases}
\]
\end{conj}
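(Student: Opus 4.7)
The plan is to split the conjecture along the critical threshold $\lambda=\frac{1}{p^*}$ and address each regime by combining the tools already in hand with the trivial monotonicity $\rad(\cEpm,G_{n,m})\ge\rad(\cEpm,n)$ and the optimal-decay formula \eqref{eq:min-order}.

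For the supercritical case $\lambda>\frac{1}{p^*}$, the goal is to show $\decay(\rad(\cEpm,G_{n,m}))=\lambda+\frac{1}{p}-\frac{1}{2}$. The upper bound on the decay rate is immediate from monotonicity: since $\rad(\cEpm,G_{n,m})\ge\rad(\cEpm,n)$ deterministically, any probabilistic polynomial bound on the random quantity forces the same bound on $\rad(\cEpm,n)$, so \eqref{eq:min-order} yields $\decay(\rad(\cEpm,G_{n,m}))\le\lambda+\frac{1}{p}-\frac{1}{2}$. For the matching lower bound on the decay rate, substitute $\sigma_j=j^{-\lambda}$ into Theorem~\ref{thm:mstar-estimate-rad}. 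When $1<p\le\infty$, the assumption $\lambda p^*>1$ makes the tail $\sum_{j=k}^m j^{-\lambda p^*}$ comparable to $k^{1-\lambda p^*}$; inserting $k\asymp n/p^*$ yields the estimate \eqref{eq:bound polynomial}, namely $\rad(\cEpm,G_{n,m})\lesssim_{p,\lambda} n^{-\lambda-1/2+1/p^*}$ with exponentially high probability. When $p=1$, the supremum in Theorem~\ref{thm:mstar-estimate-rad} is attained at $j\asymp n$, producing $n^{-\lambda-1/2}\sqrt{\log n}$, whose polynomial decay rate is again $\lambda+\frac{1}{p}-\frac{1}{2}$.

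For the subcritical case with $1<p\le 2$ and $\lambda<\frac{1}{p^*}$, Proposition~\ref{pro:lower} delivers the conclusion immediately: a lower bound $\rad(\cEpm,G_{n,m})\ge\frac{1}{2}$ holding uniformly in $n$ (with probability arbitrarily close to one, once $m$ is large enough) is incompatible with any positive polynomial decay, so $\decay(\rad(\cEpm,G_{n,m}))=0$, as formalised in Corollary~\ref{cor:lower}. The boundary $\lambda=\frac{1}{p^*}$ is absorbed via the generalisation Proposition~\ref{pro:lower-general} flagged in the remark following Proposition~\ref{pro:lower}. The case $p=1$ is vacuous, since $\lambda>\frac{1}{s}=0$ leaves no $\lambda\le\frac{1}{p^*}=0$ to consider.

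The main obstacle is the subcritical regime for $2<p\le\infty$, i.e., the strip $\frac{1}{2}-\frac{1}{p}<\lambda\le 1-\frac{1}{p}$. One partial reduction is available: the inclusion $\mathcal{E}_{2,\sigma}^{m}\subset\cEpm$, valid because $\|\cdot\|_p\le\|\cdot\|_2$ for $p\ge 2$, combined with the $p=2$ lower bound of \cite{HKN+2019}, yields the desired non-decay whenever $\|\sigma\|_2=\infty$, i.e., for $\lambda\le\frac{1}{2}$. The remaining slice $\frac{1}{2}<\lambda\le 1-\frac{1}{p}$, in which $\|\sigma\|_2<\infty$ but $\|\sigma\|_{p^*}=\infty$, cannot be reached by this simple comparison and appears to require a genuinely new ingredient; a plausible route is to adapt the duality argument behind Proposition~\ref{pro:lower}, replacing the $\ell_{p^*}$-estimates valid there (with $p^*\ge 2$) by sharper Gaussian-process bounds on $\IB_{p^*}^m$ for $p^*<2$, in the spirit of \cite{van18}.
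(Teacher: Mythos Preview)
Your treatment of the verified cases is essentially the same as the paper's: the supercritical regime $\lambda>\frac{1}{p^*}$ via Theorem~\ref{thm:mstar-estimate-rad} and monotonicity against \eqref{eq:min-order}, the subcritical regime $1<p\le 2,\ \lambda<\frac{1}{p^*}$ via Proposition~\ref{pro:lower}/Corollary~\ref{cor:lower}, and the part $p\ge 2,\ \lambda\le\frac{1}{2}$ via the inclusion $\mathcal{E}_{2,\sigma}^m\subset\cEpm$ together with the $p=2$ lower bound from \cite{HKN+2019}. You also correctly identify the open slice $p>2,\ \frac{1}{2}<\lambda\le\frac{1}{p^*}$, which is the paper's second unresolved case.

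There is, however, a genuine gap: you claim that the boundary $\lambda=\frac{1}{p^*}$ for $1<p<2$ ``is absorbed via the generalisation Proposition~\ref{pro:lower-general} flagged in the remark following Proposition~\ref{pro:lower}.'' This misreads the remark. The remark states explicitly that the boundary case $\sigma_j=j^{-1/p^*}$ is \emph{not} covered; what it does cover is the modified sequence $\sigma_j=j^{-1/p^*}a_j$ with $a_j\to\infty$, which is a different object. Concretely, the hypothesis of Proposition~\ref{pro:lower-general} requires $n\le \varepsilon\,\sigma_m^2\,m^{2/p^*}$; at the exact boundary $\sigma_m=m^{-1/p^*}$ this reads $n\le\varepsilon<1$, so the proposition yields nothing for any $n\ge 1$. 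Consequently the case $1<p<2,\ \lambda=\frac{1}{p^*}$ remains open, and the paper lists it as such. Since this is a conjecture and not a theorem, no complete proof is expected; your write-up should acknowledge both open cases rather than closing one of them incorrectly.
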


By the discussion prior to the conjecture, it is verified except for the two cases
\begin{enumerate}
	\item $1< p<2$ and $\lambda=\frac{1}{p^*}$,
	\item $p> 2$ and $\frac{1}{2}<\lambda\le \frac{1}{p^*}$.
\end{enumerate}

As a matter of fact, it seems reasonable to conjecture that $\decay(\rad(\cEpm,G_{n,m})) = \decay(\rad(\cEpm,n))$ as long as $\|(\sigma_j)_{j\in\IN}\|_{p^*}<\infty$, while $\decay(\rad(\cEpm,G_{n,m}))=0$ whenever $\|(\sigma_j)_{j\in\IN}\|_{p^*}=\infty$. We leave this as an open problem for future investigation. 

\subsection*{Organization of the paper}

We end this section with an overview of the remainder of this article. The proof of Theorem \ref{thm:mstar-estimate-rad} is carried out in Section \ref{sec:mstar}. Theorem \ref{thm:radius-quasi} will be proved in Section \ref{sec:CS}, which also contains the necessary background on sparse approximation. Section \ref{sec:lower} provides a proof of Proposition~\ref{pro:lower}.
Finally, in the \hyperref[sec:diag]{Appendix} we present known results on the optimal radius $\rad(\cEpm,n)$ which are deduced via Gelfand numbers of diagonal operators. 

\section{An upper bound via an $M^{*}$-estimate -- the case $1\leq p\leq \infty$}
\label{sec:mstar}
In this section, we will prove Theorem~\ref{thm:mstar-estimate-rad}. Our approach is based on estimates on the mean width of the intersection of the $\ell_p$-ellipsoid $\cEpm$ with a Euclidean ball, which we obtain using Gordon's $M^*$-estimate. 

% % % % % % % % % % % % % % % % % % % % % % % % %
\subsection{An $M^*$-estimate for $\cEpm\cap \ro \IB_{2}^{m}$}
% % % % % % % % % % % % % % % % % % % % % % % % %

Let $K\subset\IR^{m}$ be a convex body and $h_{K}: \mathbb S^{m-1}\to\IR$, $u\mapsto\sup_{y\in K}\langle u,y\rangle$ be its support function. The (half) mean width of $K$ is given by
\[
	M^{*}(K)
	:=\int_{\mathbb S^{m-1}}h_{K}(u)\,\dd\sigma^{m-1}(u),
\]
where $\mathbb S^{m-1}:=\{x\in \IR^m: \norm{x}_2=1\}$ is the Euclidean unit sphere and $\sigma^{m-1}$ the normalized surface measure on it. Let $g_{1},g_{2},\ldots$ be independent standard Gaussian random variables.
Then it is known that the mean width can be expressed through the expected supremum of a suitable Gaussian process (see, e.g., \cite[Lemma 9.1.3]{AGM2015}), namely,
\begin{equation}\label{eq:mstargaussian}
	M^{*}(K)
	%= \frac{1}{a_{m}}\IE\sup_{t\in K}X_t
	= \frac{1}{c_m} \IE \sup_{t\in K}\sum_{j=1}^{m}t_j g_j,
\end{equation}
where $c_m\asymp \sqrt{m}$. We shall use Gordon's theorem on subspaces escaping through a mesh \cite{Gor88} in the form stated in \cite[Theorem 9.3.8]{AGM2015} with $\gamma=\frac{1}{2}$ there. 
\begin{proposition}\label{pro:escape}
	Let $K\subset\IR^{m}$ be a convex body containing the origin in its interior. For any $1\le n<m$ there exists a subset of the Grassmannian $\mathcal{G}_{m,m-n}$ with Haar measure at least $1-\frac{7}{2} \exp(-\frac{1}{72}a_{n}^{2})$ such that for any subspace $E_{n}$ in this set and all $x\in K\cap E_{n}$ we have
\[
	\norm{x}_{2}\le 2\frac{a_{m}}{a_{n}}M^{*}(K),
\]
where, for each $k\in\IN$,
\begin{equation}\label{eq:asymptotic ak}
	a_{k}
	:=\IE\Big(\sum_{j=1}^{k}g_{i}^{2}\Big)^{1/2}
	=\frac{\sqrt{2}\Gamma((k+1)/2)}{\Gamma(k/2)}
	\asymp \sqrt{k}.
\end{equation}
\end{proposition}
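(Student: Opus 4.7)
The plan is to reformulate the claim as a concentration estimate for a Gaussian matrix and apply Gordon's Gaussian min-max comparison inequality. Since a uniform random element of $\mathcal{G}_{m,m-n}$ has the same distribution as $\ker G$ for a standard Gaussian $G\in\IR^{n\times m}$, I may work with the latter. For a radius $R>0$ to be fixed later, set $T_R:=K\cap R\mathbb{S}^{m-1}$. Convexity of $K$ together with $0\in\mathrm{int}(K)$ yields that $\sup_{x\in K\cap\ker G}\|x\|_2\le R$ if and only if $T_R\cap \ker G=\emptyset$, equivalently $\inf_{x\in T_R}\|Gx\|_2>0$. Writing $\|Gx\|_2=\sup_{v\in\mathbb{S}^{n-1}}\langle Gx,v\rangle$ exposes a Gaussian min–max that one now wants to lower bound.

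Introduce the two centered Gaussian processes
\[
X_{x,v}:=\langle Gx,v\rangle,\qquad Y_{x,v}:=\langle g,x\rangle+R\,\langle h,v\rangle,
\]
indexed by $(x,v)\in T_R\times\mathbb{S}^{n-1}$, with $g\in\IR^m$, $h\in\IR^n$ independent standard Gaussians. A direct covariance computation gives
\[
\IE(Y_{x,v}-Y_{x',v'})^2-\IE(X_{x,v}-X_{x',v'})^2=2(R^2-\langle x,x'\rangle)(1-\langle v,v'\rangle)\ge 0,
\]
with equality whenever $x=x'$. Gordon's min-max inequality then yields
\[
\IE\inf_{x\in T_R}\|Gx\|_2\ \ge\ \IE\inf_{x\in T_R}\bigl[\langle g,x\rangle+R\|h\|_2\bigr]\ \ge\ Ra_n-\IE\sup_{x\in K}\langle g,x\rangle\ =\ Ra_n-a_mM^*(K),
\]
where symmetry of $g$ turns the infimum into a supremum, the monotonicity $T_R\subset K$ is used to enlarge the index set, and the last equality is just \eqref{eq:mstargaussian} with $c_m=a_m$.

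For the probabilistic upgrade, the map $G\mapsto\inf_{x\in T_R}\|Gx\|_2$ is Lipschitz in the Frobenius norm with constant $\sup_{x\in T_R}\|x\|_2=R$, so Gaussian concentration provides a tail bound of the form $\exp(-t^2/(2R^2))$. Choosing $R=2a_mM^*(K)/a_n$ makes the expectation at least $a_mM^*(K)=Ra_n/2$, and taking $t$ of the same order leaves $\inf_{x\in T_R}\|Gx\|_2$ strictly positive with probability at least $1-\exp(-c\,a_n^2)$. On this event $\ker G$ misses $T_R$, hence $\|x\|_2\le R$ for every $x\in K\cap \ker G$, giving the stated bound.

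The substantive parts of the argument are the covariance check that activates Gordon's inequality (routine but unavoidable) and the passage from ``misses $T_R$'' to the radius bound, which genuinely uses convexity of $K$. The main nuisance is instead the extraction of the precise numerical constants $\tfrac{7}{2}$ and $1/72$ in the tail: the sketch above gives an exponent of order $a_n^2$ but with less sharp constants, and tightening them amounts to bookkeeping across several deviation steps in the specific normalization of Gordon's inequality. I would rather borrow them from \cite[Theorem 9.3.8]{AGM2015} than optimize by hand.
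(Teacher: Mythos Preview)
The paper does not prove this proposition at all; it is quoted verbatim from \cite[Theorem~9.3.8]{AGM2015} (with $\gamma=\tfrac12$) and used as a black box. Your sketch is a correct outline of the standard proof of that theorem---Gordon's min--max comparison applied to $X_{x,v}=\langle Gx,v\rangle$ versus $Y_{x,v}=\langle g,x\rangle+R\langle h,v\rangle$, followed by Gaussian Lipschitz concentration---so there is nothing to compare beyond noting that you have supplied an argument where the paper merely cites one. Two small cosmetic points: the ``if and only if'' should really be just ``if'' (the direction you actually use), and your concentration step in fact yields the cleaner bound $\exp(-a_n^2/8)$, so you are underselling your constants rather than needing to borrow the paper's $\tfrac{7}{2}\exp(-a_n^2/72)$.
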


We first bound $M^{*}(\cEpm\cap \ro \IB_{2}^{m})$, where $\ro>0$ will be chosen suitably later, and then apply Proposition \ref{pro:escape} to $\cEpm\cap \ro \IB_{2}^{m}$ and translate the result to our setting. First, we present an elementary estimate for $\ell_q$-norms of structured Gaussian random vectors.
\begin{lemma}\label{lem:khintchine-gaussian}
	Let $k\in\IN$ and $1\le q<\infty$. If $b=(b_{j})_{j=1}^k\in\IR^k$ and $X=(b_j g_j)_{j=1}^k$ with independent standard Gaussian random variables $g_1,\dots,g_k$, then
\[
	\gamma_{1}\norm{b}_{q}
	\le \IE \norm{X}_{q}
	\le \gamma_{q}\norm{b}_{q},
	\quad \text{where }\quad
	\gamma_{q}
	:=\big(\IE\abs{g_{1}}^{q}\big)^{1/q}
	\asymp \sqrt{q}.
\]
Further, 
\[
	\IE\norm{X}_{\infty}
	\asymp \sup_{1\le j\le k}b_{j}^{*}\sqrt{\log(j)+1},
\]
where $(b_j^*)_{j=1}^k$ is the non-increasing rearrangement of $(|b_j|)_{j=1}^k$.
\end{lemma}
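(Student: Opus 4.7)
The statement splits into a finite-$q$ assertion and a $q=\infty$ assertion, and the plan is to treat each by Jensen's inequality applied to the right convex or concave function, exploiting that $X$ has independent coordinates with known absolute moments. For $1\le q<\infty$, the upper bound follows from concavity of $t\mapsto t^{1/q}$, which gives
\[
\IE\norm{X}_q\le \bigl(\IE\norm{X}_q^q\bigr)^{1/q}=\Bigl(\sum_{j=1}^k|b_j|^q\gamma_q^q\Bigr)^{1/q}=\gamma_q\norm{b}_q.
\]
For the matching lower bound (assume $b\neq 0$) I would introduce the probability weights $\pi_j:=|b_j|^q/\norm{b}_q^q$, so that $\norm{X}_q=\norm{b}_q\bigl(\sum_j\pi_j|g_j|^q\bigr)^{1/q}$. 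Since $t\mapsto t^q$ is convex for $q\ge 1$, Jensen's inequality yields the pointwise power-mean bound $\bigl(\sum_j\pi_j|g_j|^q\bigr)^{1/q}\ge \sum_j\pi_j|g_j|$, and taking expectations gives $\IE\norm{X}_q\ge \gamma_1\norm{b}_q$. The asymptotic $\gamma_q\asymp\sqrt{q}$ is a routine Stirling computation on $\IE|g_1|^q=2^{q/2}\Gamma((q+1)/2)/\sqrt{\pi}$.

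For $q=\infty$, by permutation invariance of the joint law of $(|g_j|)_{j=1}^k$ I may assume $|b_j|=b_j^*$ is non-increasing. The lower bound is immediate: restricting the max to the first $j$ coordinates and pulling out the smallest weight gives
\[
\IE\norm{X}_\infty\ge b_j^*\,\IE\max_{1\le i\le j}|g_i|\asymp b_j^*\sqrt{\log(j)+1},
\]
by the classical expected maximum of $j$ independent standard Gaussians, and one then takes the supremum over $j$. For the upper bound, set $M:=\sup_{1\le j\le k}b_j^*\sqrt{\log(j)+1}$, so that $b_j^*\le M/\sqrt{\log(j)+1}$ and $\norm{X}_\infty\le M\cdot\max_{1\le j\le k}|g_j|/\sqrt{\log(j)+1}$. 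It then suffices to show that $\IE\max_{1\le j\le k}|g_j|/\sqrt{\log(j)+1}$ is bounded by an absolute constant independent of $k$. By the Gaussian tail and a union bound,
\[
\IP\Bigl(\max_{1\le j\le k}\frac{|g_j|}{\sqrt{\log(j)+1}}\ge t\Bigr)\le 2e^{-t^2/2}\sum_{j=1}^k j^{-t^2/2},
\]
which for $t>\sqrt{2}$ is dominated by $Ce^{-t^2/2}$ uniformly in $k$ and hence integrates to an absolute constant.

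The main obstacle I anticipate is precisely this last step in the $\ell_\infty$ case: a naive union bound on $\max_j|b_j g_j|$ or a Dudley-type chaining argument would introduce an unwanted $\sqrt{\log k}$ factor. The key trick is to absorb the full weight $(\log(j)+1)^{-1/2}$ into the normalization constant $M$, so that what remains is the maximum of the rescaled Gaussians $|g_j|/\sqrt{\log(j)+1}$ whose sub-Gaussian-type tail is $k$-free.
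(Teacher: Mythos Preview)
Your argument is correct. For $1\le q<\infty$ you take essentially the paper's route: the upper bound via Jensen on $t\mapsto t^{1/q}$ is identical, and for the lower bound the paper uses the one-line Jensen inequality $\norm{\IE X'}_{q}\le \IE\norm{X'}_{q}$ with $X'=(|b_jg_j|)_{j=1}^k$ and $\IE X'=(\gamma_1|b_j|)_{j=1}^k$, whereas you rewrite $\norm{X}_q$ as a $\pi$-weighted $q$-mean and apply the power-mean inequality. The two are equivalent uses of Jensen; the paper's version is a touch shorter.

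For $q=\infty$ the paper does not give a proof but simply cites van Handel's lecture notes, so your self-contained argument is a genuine addition. One small imprecision: the bound $2e^{-t^2/2}\sum_{j\ge 1}j^{-t^2/2}\le Ce^{-t^2/2}$ does not hold with a single constant $C$ for all $t>\sqrt{2}$, since $\zeta(t^2/2)\to\infty$ as $t\downarrow\sqrt{2}$. The fix is trivial---split the integral $\IE Y=\int_0^\infty\IP(Y\ge t)\,\dd t$ at any fixed $t_0>\sqrt{2}$, bound the tail for $t\ge t_0$ by $2\zeta(t_0^2/2)e^{-t^2/2}$, and use the trivial bound $1$ on $[0,t_0]$---but you should state it that way.
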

\begin{proof}
For $1\le q<\infty$ the upper bound follows from Jensen's inequality and the lower bound follows from $\norm{\IE X'}_{\ell_{q}^{k}}\le \IE\norm{X'}_{\ell_{q}^{k}}=\IE\norm{X}_{\ell_{q}^{k}}$, where $X'=(|b_j g_j|)_{j=1}^k$. The asymptotics for $q=\infty$ are taken from \cite[Lemmas 2.3 and 2.4]{van17}.
\end{proof}

We will combine this with \eqref{eq:mstargaussian} to estimate the mean width of the intersection as stated in the following proposition. A similar approach was used in \cite{GLM+2007} for $\ell_p$-balls. 

\begin{proposition}\label{pro:mstar-estimate}
Let $m\in\IN$. For any $0\le k< m$ and $\ro>0$,
\[
M^{*}(\cEpm\cap \ro \IB_{2}^{m})
	\lesssim
	\begin{cases}
	m^{-1/2}\Big(\ro\sqrt{k}+\sup\limits_{k+1\leq j \leq m}\sigma_{j}\sqrt{\log(j)+1}\Big) &:\, p=1, \\
	\sqrt{p^*}m^{-1/2} \Big(\ro\sqrt{k} + \big(\sum\limits_{j=k+1}^{m}\sigma_{j}^{p^{*}}\big)^{1/p^{*}}\Big) &:\, 1< p\le \infty.
	\end{cases}
\]

\end{proposition}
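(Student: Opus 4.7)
The plan is to pass through the Gaussian representation~\eqref{eq:mstargaussian} and estimate $\IE \sup_{t \in \cEpm \cap \ro\IB_2^m} \langle t, G \rangle$ with $G = (g_1,\dots,g_m)$. I would split the inner product at index $k$, namely
\[
\sup_{t} \sum_{j=1}^m t_j g_j \le \sup_{t} \sum_{j=1}^k t_j g_j + \sup_{t} \sum_{j=k+1}^m t_j g_j,
\]
and bound each piece using a different one of the two constraints defining our truncated body: the Euclidean constraint $\|t\|_2 \le \ro$ for the first $k$ coordinates, and the ellipsoid constraint $\|(t_j/\sigma_j)_{j=1}^m\|_p \le 1$ for the remaining $m-k$.

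For the low-index piece I would apply Cauchy--Schwarz together with Jensen's inequality and~\eqref{eq:asymptotic ak} to obtain
\[
\IE \sup_{\|t\|_2\le \ro} \sum_{j=1}^k t_j g_j \le \ro\, \IE \Big(\sum_{j=1}^k g_j^2\Big)^{1/2} = \ro\, a_k \asymp \ro\sqrt{k}.
\]
For the high-index piece, since $t\in\cEpm$ forces $\|(t_j/\sigma_j)_{j=k+1}^m\|_p\le 1$, H\"older's inequality gives
\[
\sum_{j=k+1}^m t_j g_j \;=\; \sum_{j=k+1}^m \frac{t_j}{\sigma_j}\,(\sigma_j g_j) \;\le\; \Big\|(\sigma_j g_j)_{j=k+1}^m\Big\|_{p^*}.
\]
Taking supremum over $t$ and then expectation reduces the remaining task to estimating the expected $\ell_{p^*}$-norm of the Gaussian vector $(\sigma_j g_j)_{j=k+1}^m$, which is precisely what Lemma~\ref{lem:khintchine-gaussian} supplies.

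For $1<p\le\infty$ (so $1\le p^*<\infty$) the lemma together with $\gamma_{p^*}\asymp \sqrt{p^*}$ yields
\[
\IE \Big\|(\sigma_j g_j)_{j=k+1}^m\Big\|_{p^*} \lesssim \sqrt{p^*}\Big(\sum_{j=k+1}^m \sigma_j^{p^*}\Big)^{1/p^*}.
\]
For $p=1$, hence $p^*=\infty$, the lemma applied to the already non-increasing sequence $(\sigma_{k+i})_{i=1}^{m-k}$ gives
\[
\IE \max_{k+1 \le j \le m} \sigma_j |g_j| \;\asymp\; \sup_{1 \le i \le m-k} \sigma_{k+i}\sqrt{\log(i)+1} \;\le\; \sup_{k+1 \le j \le m} \sigma_j\sqrt{\log(j)+1},
\]
where the last inequality merely uses $\log(i)\le \log(k+i)$. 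Summing the two contributions and dividing by $c_m\asymp \sqrt{m}$ then yields the claimed bound in both cases.

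There is no serious obstacle here; the one cosmetic subtlety is the index shift in passing from the non-increasing rearrangement delivered by Lemma~\ref{lem:khintchine-gaussian} to the expression $\sup_{k+1\le j\le m}\sigma_j\sqrt{\log(j)+1}$ appearing in the statement for $p=1$, which is absorbed by the trivial estimate above. The rest is a textbook split-plus-H\"older argument, mirroring the approach used for $\ell_p$-balls in \cite{GLM+2007}.
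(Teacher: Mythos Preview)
Your proposal is correct and follows essentially the same route as the paper: split $\langle t,g\rangle$ at index $k$, control the first block via Cauchy--Schwarz and the Euclidean constraint, control the tail via H\"older and the ellipsoid constraint, and then invoke Lemma~\ref{lem:khintchine-gaussian} for the expected $\ell_{p^*}$-norm of $(\sigma_j g_j)_{j>k}$. You are actually slightly more explicit than the paper about the index shift in the $p=1$ case, which the paper absorbs silently into the constant.
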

\begin{proof}
We shall use the representation \eqref{eq:mstargaussian} and first bound the supremum. For all $x\in\IR^{m}$ and $y\in \cEpm \cap \ro \IB_{2}^{m}$, it follows from Hölder's inequality that, for every $0\leq k < m$,
\[
	\langle x,y\rangle
	\le \sum_{j=1}^{k}\abs{x_{j}y_{j}}+\sum_{j=k+1}^{m}\abs{x_{j}y_{j}}
	\le \ro\Big(\sum_{j=1}^{k}x_{j}^{2}\Big)^{1/2}+\Big(\sum_{j=k+1}^{m}\sigma_{j}^{p^{*}}\abs{x_{j}}^{p^{*}}\Big)^{1/p^{*}},
\]
where the first sum is empty if $k=0$.
Combining this estimate with Lemma \ref{lem:khintchine-gaussian}, we obtain that if $p>1$,
\begin{align*}
	\IE\sup_{y\in \cEpm \cap \ro \IB_{2}^{m}}\sum_{j=1}^{m}y_{j}g_{j}
	\le \ro\, a_{k}+\gamma_{p^{*}}\Big(\sum_{j=k+1}^{m}\sigma_{j}^{p^{*}}\Big)^{1/p^{*}}.
\end{align*}
By the previously stated asympotics for $a_{k}$ in \eqref{eq:asymptotic ak}  and $\gamma_{p^{*}}$ in Lemma \ref{lem:khintchine-gaussian}, we obtain the statement for $p>1$.

If $p=1$, then we deduce from Lemma \ref{lem:khintchine-gaussian} that, for some suitable $C\in(0,\infty)$,
\[
	\IE\,\sup_{y\in \cEpm \cap \ro \IB_{2}^{m}} \sum_{j=1}^{m}y_jg_{j}
	\le \ro\sqrt{k}+C\sup_{k+1\leq j \leq m}\sigma_{j}\sqrt{\log(j)+1}.
\]
This completes the proof.
\end{proof}

% % % % % % % % % % % % % % % % % % % % % % % % % % % % % % % % %
\subsection{The proof of Theorem \ref{thm:mstar-estimate-rad}}
% % % % % % % % % % % % % % % % % % % % % % % % % % % % % % % % %

With the $M^*$-estimates on rounded versions of our ellipsoids from the previous subsection, we are now ready to prove the upper bound on the radius of random information. 

\begin{proof}[Proof of Theorem \ref{thm:mstar-estimate-rad}]
It follows from Gordon's $M^*$-estimate (Proposition \ref{pro:escape}) applied to the convex body $\cEpm\cap \rho \IB_2^m$ that, with probability as claimed, a random subspace $E_{n}$ of codimension $n$ chosen uniformly according to the Haar probability on $\mathcal G_{m,m-n}$ satisfies 
\[
	\rad(\cEpm\cap\ro \IB_{2}^{m}, E_{n})	
	=\sup_{x\in \cEpm \cap \ro \IB_{2}^{m}\cap E_{n}}\norm{x}_{2}
	\le 2\frac{a_{m}}{a_{n}}M^{*}(\cEpm\cap\ro \IB_{2}^{m}).
\]
We start with the case $p>1$. Inserting the bound obtained in Proposition \ref{pro:mstar-estimate}, we obtain a constant $C\in (0,\infty)$ such that, for any $0\le k< m$ and $1\leq n< m$,
\[
	\rad(\cEpm\cap\ro \IB_{2}^{m}, E_{n})	
	\le C \frac{\sqrt{p^*}}{\sqrt{n}}\Big(\ro\sqrt{k}+\Big(\sum_{j=k+1}^{m}\sigma_{j}^{p^{*}}\Big)^{1/p^{*}}\Big).
\]
First, let $\frac{n}{p^*}>4C^2$. Setting $\ro:=\frac{1}{\sqrt{k}}\Big(\sum_{j=k+1}^{m}\sigma_{j}^{p^{*}}\Big)^{1/p^{*}}$ with $k:=c\frac{n}{p^*}$, where the constant $c\in(0,\infty)$ is chosen (sufficiently small) such that $k\in \IN$ with $1\le k<m$ and 
\[
	\rad(\cEpm\cap\ro \IB_{2}^{m}, E_{n})	
	<\varrho
\]
and so in particular that	
\[
	\rad(\cEpm, E_{n})	
	<\varrho
\]
for all $m\in \IN$ and $1\le n <m$. The latter is so because a set which has circumradius smaller than $\varrho$ when intersected with $\ro \IB_2^m$ must necessarily have itself circumradius smaller than $\varrho$. Noting that the kernel of a Gaussian random matrix in $\IR^{n\times m}$ is uniformly distributed on the Grassmannian $\mathcal{G}_{m,m-n}$ and that
\[
	\ro
	\lesssim \frac{\sqrt{p^*}}{\sqrt{n}}\Big(\sum_{j=k+1}^{m}\sigma_{j}^{p^{*}}\Big)^{1/p^{*}},
\]
proves the result for $p>1$ in this case. If $\frac{n}{p^*}\le 4C^2$, then let $k=0$ and let $\varrho>0$ be large enough such that $\cEpm\subset \varrho \IB_2^m$ and thus 
\[
\rad(\cEpm,E_n)
\le C\frac{\sqrt{p^*}}{\sqrt{n}}\Big(\sum_{j=1}^{m}\sigma_{j}^{p^{*}}\Big)^{1/p^{*}}.
\]
In both cases, $k+1\asymp \frac{n}{p^*}$. The proof for $p=1$ is carried out analogously. 
\end{proof}

We conclude this section by stating a bound on the supremum of a Gaussian process indexed by vectors in an $\ell_p$-ellipsoid. The result can be read off from the proof of Proposition \ref{pro:mstar-estimate}. In view of the dependence on the parameter $p$, it improves upon a bound of van Handel in \cite{van18}.

\begin{corollary}\label{cor:mstar-estimate}
For all $m\in\IN$, we have 
\[
\IE\sup_{y\in \cEpm}\sum_{j=1}^{m}g_j y_j
\lesssim 
\begin{cases}
\sup\limits_{1\le j\le m}\sigma_{j}\sqrt{\log(j)+1} &:\,p=1, \\
\sqrt{p^*} \big(\sum\limits_{j=1}^{m}\sigma_{j}^{p^{*}}\big)^{1/p^{*}} & :\,1<p\le\infty.
\end{cases}
\]
\end{corollary}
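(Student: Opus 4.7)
The plan is to extract the bound directly from the argument already carried out in the proof of Proposition \ref{pro:mstar-estimate}, specialized to the case $k=0$ (so that the truncation by a Euclidean ball plays no role). The key observation is that the Hölder step there produces a pointwise bound on the supremum which has nothing to do with the parameter $\rho$ once the first sum is absent.

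First, I would fix $y\in\cEpm$ and apply Hölder's inequality with conjugate exponents $p$ and $p^{*}$ to $\sum_{j=1}^{m} y_j g_j = \sum_{j=1}^{m}(y_j/\sigma_j)(\sigma_j g_j)$, obtaining
\[
\sum_{j=1}^{m} y_j g_j \;\le\; \Big\|(y_j/\sigma_j)_{j=1}^{m}\Big\|_p\cdot \Big\|(\sigma_j g_j)_{j=1}^{m}\Big\|_{p^{*}} \;\le\; \Big\|(\sigma_j g_j)_{j=1}^{m}\Big\|_{p^{*}},
\]
since the first factor is at most $1$ by definition of $\cEpm$. Taking the supremum over $y\in\cEpm$ and then the expectation, we are reduced to controlling $\IE\|(\sigma_j g_j)_{j=1}^{m}\|_{p^{*}}$.

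Next, I would invoke Lemma \ref{lem:khintchine-gaussian} with $b=(\sigma_j)_{j=1}^{m}$. For $1<p\le\infty$ the lemma yields
\[
\IE\Big\|(\sigma_j g_j)_{j=1}^{m}\Big\|_{p^{*}} \;\le\; \gamma_{p^{*}}\Big(\sum_{j=1}^{m}\sigma_j^{p^{*}}\Big)^{1/p^{*}} \;\asymp\; \sqrt{p^{*}}\,\Big(\sum_{j=1}^{m}\sigma_j^{p^{*}}\Big)^{1/p^{*}},
\]
using the stated asymptotics $\gamma_q\asymp\sqrt{q}$. For $p=1$, the $q=\infty$ part of the lemma gives
\[
\IE\Big\|(\sigma_j g_j)_{j=1}^{m}\Big\|_{\infty} \;\asymp\; \sup_{1\le j\le m}\sigma_j^{*}\sqrt{\log(j)+1} \;=\; \sup_{1\le j\le m}\sigma_j\sqrt{\log(j)+1},
\]
where the last equality uses that $(\sigma_j)$ is already non-increasing, so the non-increasing rearrangement equals the sequence itself. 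Combining these two cases yields the claimed bound.

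There is essentially no obstacle here: the work was already done in the proof of Proposition \ref{pro:mstar-estimate}, and the corollary amounts to stripping away the normalization $c_m^{-1}\asymp m^{-1/2}$ built into the mean-width representation \eqref{eq:mstargaussian} and taking $k=0$ (so the $\rho\sqrt{k}$ term disappears). The only minor point to record is that for $p=1$ one must note that $(\sigma_j)$ being monotone already equals its own non-increasing rearrangement, which is why $\sigma_j^{*}$ can be replaced by $\sigma_j$ in the final statement.
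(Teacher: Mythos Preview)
Your proposal is correct and follows exactly the approach the paper intends: the paper explicitly states that the corollary ``can be read off from the proof of Proposition \ref{pro:mstar-estimate}'', and you have done precisely this by taking $k=0$ in the H\"older splitting there and applying Lemma \ref{lem:khintchine-gaussian}. Your remark that the monotonicity of $(\sigma_j)$ makes the rearrangement trivial is the only detail one might add, and you have recorded it.
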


In \cite{van18} van Handel deduced this result for $1\le p <\infty$ with an unspecified constant from the majorizing measure theorem and noted in \cite[Remark 3.4]{van18} that his approach is not sufficiently accurate to recover the correct behavior in $p$. In Corollary \ref{cor:mstar-estimate}, we obtain an upper bound on the behavior in $p$ and thus complement his result. 

Employing estimates for entropy numbers of diagonal operators (see, e.g., \cite{van18}), it can be deduced from Corollary \ref{cor:mstar-estimate} that the ellipsoid $\cEpm$ with semiaxes satisfying
\[
	\sigma\in \ell_{p^*} \text{ but } \sigma\not\in\ell_{p^*,1} \text{ for }1<p<\infty \quad\text{or}\quad
	\quad \sup_{j\in\IN}\sigma_{j}\sqrt{\log(j)+1}<\infty\text{ but }\sigma\not\in\ell_{\infty,1}
\]
is an example, where Dudley's bound fails to be sharp if the dimension becomes large. Here, $\ell_{p,q}$ is a Lorentz space as defined in the \hyperref[sec:diag]{Appendix}.

% % % % % % % % % % % % % % % % % % % % % % % % % % % % % % % % %
\section{An upper bound via compressed sensing techniques --  the case $0<p<1$}
\label{sec:CS}
% % % % % % % % % % % % % % % % % % % % % % % % % % % % % % % % % 

In this section we prove Theorem \ref{thm:radius-quasi} using techniques from compressed sensing in the spirit of Foucart, Pajor, Rauhut and Ullrich \cite{FPR+10} who have given upper and lower bounds for the Gelfand widths of $\ell_p$-balls in $\ell_q$ with $0<p\le 1$ and $p<q\le 2$. They build upon work by Donoho \cite{Don2006} and others. Our proof is an extension to $\ell_p$-ellipsoids. Before we present it, we shall explain some of the relevant concepts used in compressed sensing for the recovery of sparse vectors. We refer the reader to the monograph \cite{FR2013} for more information.

% % % % % % % % % % % % % % % % % % % % % % % % % % % % % % % % % % % % % %
\subsection{Elements from compressed sensing and bounds on the best $s$-term approximation}
% % % % % % % % % % % % % % % % % % % % % % % % % % % % % % % % % % % % % %

Let $m,s\in\IN$ with $1\le s \le m$ and let $0<p\leq 1$. A vector $z\in\IR^m$ is called $s$-sparse if at most $s$ of its coordinates are non-zero. The error of best $s$-term approximation of $x\in \IR^m$ in the $\ell_p$-(quasi-)norm is
\[
\sigma_s(x)_p := \inf\{\|x-z\|_p : z \text{ is }s\text{-sparse}\}.
\]

Given the information $N_n x =y$, where $N_n\in \IR^{n\times m}$, sparse vectors can be reconstructed via $\ell_p$-minimization, that is,
\[
\Delta_p(y): = \text{arg min} \|z\|_p \quad \text{subject to }N_n z=y.
\]
Note that $\Delta_p$ is a mapping from $\IR^n$ to $\IR^m$ which depends on $N_n$. If the matrix $N_n$ satisfies the restricted isometry property with a small restricted isometry constant $\delta_{2s}(N_n)$ of order $2s$, which is the smallest $\delta>0$ such that
\[
(1-\delta)\|x\|_2^2 
\le \|Ax\|_2^2
\le (1+\delta)\|x\|_2^2 \quad \text{for all }2s\text{-sparse }x\in\IR^m,
\]
then $s$-sparse vectors $x\in\IR^m$ can be recovered exactly, i.e., $x=\Delta_p(N_n x)$. It is widely known that Gaussian matrices satisfy this with high probability. See, for example, Theorem 9.2 in \cite{FR2013}, which we adapt in the following lemma.
\begin{lemma} \label{lem:gaussian-RIP}
	For every $\delta\in(0,1)$ there exist constants $C_1,C_2>0$ such that $\delta_{s}(n^{-1/2}G_{n,m})\le \delta$ with probability at least $1-2\exp(-C_2 n)$ provided that
$	
	n\ge C_1 s \log({\rm e}m/s)
$
for $m\in\IN$.
\end{lemma}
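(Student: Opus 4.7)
The plan is to follow the classical route to the restricted isometry property for Gaussian matrices: establish strong concentration of $\norm{n^{-1/2}G_{n,m}x}_{2}^{2}$ around $1$ for each fixed $s$-sparse unit vector $x$, and then promote this pointwise control to a uniform bound via an $\varepsilon$-net argument combined with a union bound over the choice of support. For the first step, fix a subset $T \subseteq \{1,\ldots,m\}$ with $\abs{T}=s$ and a unit vector $x\in\IR^m$ supported on $T$. By rotation invariance of the Gaussian distribution, $\norm{n^{-1/2}G_{n,m}x}_{2}^{2}$ has the same distribution as $\frac{1}{n}\sum_{i=1}^{n} g_i^{2}$ with $g_1,\ldots,g_n$ i.i.d.\ standard Gaussians, and the Laurent--Massart chi-squared tail bound yields
\[
\IP\Big(\big|\norm{n^{-1/2}G_{n,m}x}_{2}^{2} - 1\big| > t\Big) \le 2\exp(-cnt^{2}), \qquad t\in(0,1),
\]
for an absolute constant $c>0$.

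For the second step, fix $\delta\in(0,1)$ and choose, for each support $T$ of size $s$, a $(\delta/4)$-net $\mathcal{N}_{T}$ of the Euclidean unit sphere of the coordinate subspace $\IR^{T}$, which a standard volumetric estimate allows with $\abs{\mathcal{N}_{T}} \le (12/\delta)^{s}$. Taking the union bound over all $\binom{m}{s} \le (em/s)^{s}$ supports and all points of each net, applied with $t=\delta/2$, gives
\[
\IP\Big(\max_{T,\,x\in\mathcal{N}_{T}} \big|\norm{n^{-1/2}G_{n,m}x}_{2}^{2} - 1\big| > \delta/2\Big) \le 2\Big(\frac{em}{s}\Big)^{s}\Big(\frac{12}{\delta}\Big)^{s}\exp(-cn\delta^{2}/4).
\]
If $C_1$ is chosen sufficiently large (depending on $\delta$), the hypothesis $n\ge C_1 s\log(em/s)$ absorbs the combinatorial and net factors into the exponential and leaves a residual bound of the claimed form $2\exp(-C_2 n)$ with some $C_2>0$.

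Finally, a standard approximation lemma promotes the net bound to a uniform estimate: given the control on $\mathcal{N}_T$ with constant $\delta/2$, one decomposes any $s$-sparse unit $x$ supported on $T$ as $x_0+(x-x_0)$ with $x_0\in\mathcal{N}_T$ at distance at most $\delta/4$, and iterates, showing that the RIP-type deviation on the whole $s$-sparse unit sphere is bounded by twice that on the nets; this yields $\delta_{s}(n^{-1/2}G_{n,m})\le\delta$ with the stated probability. The main bookkeeping obstacle is tracking how the constants $C_1$ and $C_2$ depend on $\delta$ while keeping the exponent in $n$ strictly positive after absorbing the terms $s\log(em/s)$ from the supports and $s\log(1/\delta)$ from the nets; conceptually this is the heart of every Gaussian RIP proof and is exactly what dictates the sample-size condition appearing in the lemma.
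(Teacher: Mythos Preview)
Your proof is correct and follows the standard concentration-plus-net argument for the Gaussian RIP. The paper does not give its own proof of this lemma; it simply cites Theorem~9.2 in \cite{FR2013}, whose proof is precisely the route you outline (subgaussian concentration for fixed vectors, a covering of each coordinate sphere, and a union bound over the $\binom{m}{s}$ supports), so your approach matches the intended one.
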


We will prove a more general version of Theorem \ref{thm:radius-quasi}, where $q$ will be allowed to be smaller than $2$. To this end, we introduce a notation for the radius of a section of $\cEpm$ measured in the $\ell_q$-(quasi-)norm, $0<q\le \infty$. Given any subspace $E_n$ of $\IR^m$ with codimension $n$, we define
\[
\rad_q(\cEpm,E_n):=\sup_{x\in \cEpm\cap E_n}\|x\|_q.
\]

The following extension of the equality \eqref{eq:rad-minwce} to the quasi-Banach space setting will be useful. It is the analogue of \cite[Proposition 1.2]{FPR+10} for individual matrices/subspaces. For convenience we provide a short proof.

\begin{lemma}
	\label{lem:radius}
Let $n,m\in\IN$ and let $K\subset \IR^m$ be such that $K=-K$ and $K+K\subset C_K K$ for some $C_K\ge 2$. Further, let $\|\cdot\|_X$ be a quasi-norm on $\IR^m$ such that for some $C_X\in(0,\infty)$ and all $x,y\in\IR^m$ we have $\|x+y\|_X\le C_X(\|x\|_X+\|y\|_X)$. Then
\[
C_X^{-1}\,\rad_X(K,\ker N_n)
\le \inf_{\varphi}\sup_{x\in K}\|x-\varphi(N_n x)\|_X
\le C_K\,\rad_X(K,\ker N_n)
\]
for all $N_n\in\IR^{n\times m}$, where $\rad_X(K,E):=\sup_{x\in K\cap E}\|x\|_X$ for any set $E\subset \IR^m$ and the infimum runs over all mappings $\varphi\colon\IR^n\to \IR^m$ .
\end{lemma}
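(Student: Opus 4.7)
The plan is to prove the two inequalities separately. For the lower bound I will exploit the symmetry $K=-K$ together with the quasi-triangle inequality; for the upper bound I will write down an explicit (not necessarily measurable) recovery map built by selecting preimages under $N_n$ inside $K$.

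For the lower bound, I fix an arbitrary recovery map $\varphi\colon\IR^n\to\IR^m$ and set $\eta:=\sup_{x\in K}\|x-\varphi(N_n x)\|_X$. The key observation is that for any $z\in K\cap\ker N_n$, both $z$ and $-z$ lie in $K$ and are mapped to $0$ by $N_n$, so $\varphi(N_n z)=\varphi(N_n(-z))=\varphi(0)$, and hence both $\|z-\varphi(0)\|_X$ and $\|{-z}-\varphi(0)\|_X$ are bounded by $\eta$. I then apply the quasi-triangle inequality to $a:=z-\varphi(0)$ and $b:=z+\varphi(0)$, whose sum is $2z$ and whose norms coincide with the two quantities just bounded (using $\|{-v}\|_X=\|v\|_X$). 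This yields $2\|z\|_X\leq 2C_X\eta$; taking the supremum over $z\in K\cap\ker N_n$ and then the infimum over $\varphi$ delivers the desired lower bound $C_X^{-1}\rad_X(K,\ker N_n)\leq \inf_\varphi\sup_{x\in K}\|x-\varphi(N_n x)\|_X$.

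For the upper bound, I define $\varphi$ on the range of $N_n|_K$ by selecting, for each $y$ in that range, some preimage $\varphi(y)\in K$ with $N_n\varphi(y)=y$; outside this range I set $\varphi\equiv 0$. No measurability is required because the infimum in the statement is taken over arbitrary maps. For any $x\in K$, the vector $x-\varphi(N_n x)$ lies in $K+K\subset C_K K$ by symmetry and hypothesis, and lies in $\ker N_n$ by construction. Homogeneity of the quasi-norm then gives $\|x-\varphi(N_n x)\|_X\leq \rad_X(C_K K,\ker N_n)=C_K\,\rad_X(K,\ker N_n)$, and taking the supremum over $x\in K$ followed by the infimum over recovery maps closes the upper estimate.

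The main bookkeeping point, and really the only subtlety, is to choose the pair $(a,b)$ in the lower bound so that $a+b=2z$: the naive choice $a=z-\varphi(0)$, $b=-z-\varphi(0)$ gives $a+b=-2\varphi(0)$ and is useless, while the sign flip $b=z+\varphi(0)=-({-z}-\varphi(0))$ preserves the norm and yields the right cancellation. Aside from this small twist and tracking the quasi-norm constant $C_X$ correctly, the proof is entirely routine.
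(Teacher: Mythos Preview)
Your proof is correct and follows essentially the same approach as the paper: for the lower bound you use the same quasi-triangle identity $2z=(z-\varphi(0))+(z+\varphi(0))$ (the paper phrases it contrapositively as ``at least one of $\|z-\varphi(0)\|_X$, $\|{-z}-\varphi(0)\|_X$ is $\ge C_X^{-1}\|z\|_X$'', but the underlying computation is identical), and for the upper bound your construction of $\varphi$ via preimage selection and the inclusion $x-\varphi(N_nx)\in (K+K)\cap\ker N_n\subset C_K K\cap\ker N_n$ matches the paper verbatim.
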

\begin{proof}
For the lower bound take $\varphi$ arbitrary. For any $x\in K\cap \ker N_n$ also $-x\in K\cap \ker N_n$ and
\begin{equation}\label{eq:radzero}
\|x-\varphi(0)\|_X \ge C_X^{-1}\|x\|_X \quad\text{ or }\quad \|-x-\varphi(0)\|_X \ge C_X^{-1}\|x\|_X
\end{equation}
holds. Moreover, the symmetry of $K\cap \ker N_n$ implies
\[
\sup_{x\in K}\|x-\varphi(N_n x)\|_X
\ge \sup_{x\in K\cap \ker N_n}\max\{\|x-\varphi(0)\|_X,\|-x-\varphi(0)\|_X\}.
\]
Together with \eqref{eq:radzero} this proves the lower bound.

For the upper bound we specify a map $\varphi$ by $\varphi(y)=z$ for any $y\in N_n( K)$, where $z\in K$ with $N_n z=y$ is arbitrary. Then 
\[
\sup_{x\in K}\|x-\varphi(N_n x)\|_X
\le \sup_{\substack{x_1,x_2\in K\\ N_n x_1 =N_n x_2}}\|x_1-x_2\|_X
\le \sup_{x\in C_K K \cap \ker N_n}\|x\|_X
\]
since $x_1-x_2\in C_K K$ if $x_1,x_2\in K$ and $N_n (x_1-x_2)=0$ if $N_nx_1 = N_nx_2$. The fact that $C_K\,\rad_X(K,\ker N_n) = \rad_X(C_K\,K,\ker N_n)$ concludes the proof.
\end{proof}

We will use this together with the following lemma on best sparse approximation of vectors in an $\ell_p$-ellipsoid. For $\ell_q$-approximation of vectors in $\ell_p$-balls by sparse vectors it is known that, for $0<p\le q\le\infty$,
\[
\sup_{x\in\IB_p^m}\sigma_s(x)_q
\asymp_p s^{1/q-1/p}
\]
for all $m\in\IN$ and $1\le s\le m$ (see, e.g., \cite{Vyb12}). If $p=q$, the approximation error cannot be expected to decay, whereas for $\ell_p$-ellipsoids we have the following lemma for the special case of polynomially decaying $\sigma$. The proof is an adaption of the proof for $\ell_p$-balls.

\begin{lemma}
	\label{lem:nonlin-app}
	Let $m\in\IN$, $0<p\leq\infty$ and $\sigma_j=j^{-\lambda}$, $1\leq j \leq m$,  for some $\lambda>0$. Then, for all $1\le s\le m/2$,
\[
\sup_{x\in\cEpm}\sigma_s(x)_p
\asymp_{p,\lambda} s^{-\lambda}
\]
\end{lemma}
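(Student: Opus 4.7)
The plan is to prove matching upper and lower bounds by elementary rearrangement-and-monotonicity arguments. The central observation is that $\sigma_s(x)_p$ depends on $x$ only through the non-increasing rearrangement of $|x|$, while by the rearrangement inequality applied to $|x|$ (after sorting) and the monotone sequence $(\sigma_j^{-p})$, the constraint defining $\cEpm$ is tightest when the largest coordinates of $|x|$ are paired with the largest $\sigma_j$. Since $\sigma_j=j^{-\lambda}$ is already decreasing, we may pass from $x\in\cEpm$ to the non-increasing rearrangement of $|x|$ without leaving $\cEpm$. Consequently, we may assume throughout that $x_1\ge x_2\ge\cdots\ge x_m\ge 0$, in which case the constraint reads $\sum_j x_j^p j^{\lambda p}\le 1$ for $0<p<\infty$ and $x_j\le j^{-\lambda}$ for $p=\infty$.

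For the upper bound when $0<p<\infty$, the monotonicity of $x$ yields
\[
x_j^p\sum_{i=1}^j i^{\lambda p}\le\sum_{i=1}^j x_i^p i^{\lambda p}\le 1,
\]
which combined with $\sum_{i=1}^j i^{\lambda p}\asymp_{p,\lambda}j^{\lambda p+1}$ gives the pointwise estimate $x_j^p\lesssim_{p,\lambda}j^{-(\lambda p+1)}$. Summing the tail then produces
\[
\sigma_s(x)_p^p=\sum_{j=s+1}^m x_j^p\lesssim_{p,\lambda}\sum_{j>s}j^{-(\lambda p+1)}\lesssim_{p,\lambda}s^{-\lambda p}.
\]
When $p=\infty$ the estimate is immediate, since $\sigma_s(x)_\infty=x_{s+1}\le\sigma_{s+1}\asymp s^{-\lambda}$.

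For the lower bound I would test against an equal-mass vector supported on an initial block: set $x_j=c\,s^{-\lambda-1/p}$ for $1\le j\le 2s$ and $x_j=0$ for $j>2s$ when $p<\infty$ (and $x_j=\sigma_j$ on $\{1,\dots,2s\}$ when $p=\infty$). The hypothesis $s\le m/2$ ensures $2s\le m$. A direct computation gives $\sum_j(x_j/\sigma_j)^p\asymp_{p,\lambda}c^p$, so a sufficiently small constant $c=c(p,\lambda)$ places $x$ in $\cEpm$. Any $s$-sparse vector $z$ is nonzero on at most $s$ of the $2s$ coordinates where $x\ne 0$, so at least $s$ of those entries survive in $x-z$, giving $\|x-z\|_p^p\ge s\cdot(c\,s^{-\lambda-1/p})^p=c^p s^{-\lambda p}$ and hence $\sigma_s(x)_p\ge c\,s^{-\lambda}$.

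No step presents a genuine obstacle; the only point meriting care is the initial rearrangement reduction, because $\cEpm$ is not permutation-invariant when the semiaxes are unequal. However, the rearrangement inequality applied with the monotone weight sequence $(\sigma_j^{-p})$ handles this cleanly.
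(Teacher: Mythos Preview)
Your proof is correct and follows essentially the same approach as the paper: both obtain the pointwise bound $(x_j^*)^p\lesssim_{p,\lambda} j^{-(\lambda p+1)}$ from the ellipsoid constraint and sum the tail, and both take an equal-entry vector supported on the first $2s$ coordinates for the lower bound. The only cosmetic difference is that you perform the rearrangement reduction upfront via the rearrangement inequality, whereas the paper carries the permutation $\pi$ through the estimate and uses $\sum_{j\le k}\sigma_{\pi(j)}^{-p}\ge\sum_{j\le k}\sigma_j^{-p}$ at the end, which is the same observation.
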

\begin{proof}
Let $x\in\cEpm$. Then 
\[
\sigma_s(x)_p
\le\Big(\sum_{j=s+1}^{m}(x_j^*)^p\Big)^{1/p},
\]
where $(x_j^*)_{j=1}^m$ is the non-increasing rearrangement of the moduli of the coordinates of $x$. We have
\[
1
\ge \sum_{j=1}^{m}\frac{|x_j|^p}{\sigma_{j}^p}
= \sum_{j=1}^{m}\frac{(x_j^*)^p}{\sigma_{\pi(j)}^p}
\ge \sum_{j=1}^{k}\frac{(x_k^*)^p}{\sigma_{\pi(j)}^p},
\]
for any $1\leq k \leq m$, where $\pi:\{1,\ldots,m\}\to\{1,\ldots,m\}$ is a suitable permutation. Thus,
\[
(x_k^*)^p
\le \Big(\sum_{j=1}^{k}\frac{1}{\sigma_{\pi(j)}^p}\Big)^{-1}
\le \Big(\sum_{j=1}^{k}\frac{1}{\sigma_{j}^p}\Big)^{-1}
\asymp_{p,\lambda} k^{-\lambda p-1}.
\]
Inserting this bound above yields
\[
\sigma_s(x)_p
\lesssim_{p,\lambda}\Big(\sum_{j=s+1}^{m}k^{-\lambda p-1}\Big)^{1/p}
\asymp_{p,\lambda} s^{-\lambda}.
\]
The lower bound is achieved by a vector on the boundary of $\cEpm$ having its support on the first $2s$ coordinates and equal entries on these.
\end{proof}

% % % % % % % % % % % % % % % % % % % % % % % % % % % % % % %
\subsection{The proof of Theorem \ref{thm:radius-quasi}}
% % % % % % % % % % % % % % % % % % % % % % % % % % % % % % % 

With the results of the previous subsection at our disposal, we are now prepared to prove the following generalization of Theorem \ref{thm:radius-quasi}. 

\begin{thmalpha}
\label{thm:radius-quasi-general}
Let $0<p\leq 1$ and $p<q\le 2$. Assume that $\sigma_j=j^{-\lambda}$, $j\in\IN$, for some $\lambda>0$. Then there exist constants $C,D\in(0,\infty)$ such that, for all $m\in\IN$ and all $1\le n <m$ with
\[
n\ge D\log({\rm e}m/n),
\]
we have
\[
\rad_q(\cEpm,\ker G_{n,m})
\lesssim_{p,q} \Big(\frac{\log({\rm e}m/n)}{n}\Big)^{\lambda+1/p-1/q}
\]
with probability at least $1-2\exp(-C n)$.
\end{thmalpha}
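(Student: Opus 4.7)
The plan is to reduce the radius to the worst-case error of a recovery algorithm and then invoke a compressed sensing style reconstruction result for $\ell_p$-minimization. The set $K = \cEpm$ is symmetric and satisfies $K + K \subset 2^{1/p} K$ (from the $p$-convexity of $\|\cdot\|_p$, since for $x,y\in\cEpm$ we have $\|(x+y)_j/\sigma_j\|_p^p \le \|x/\sigma\|_p^p + \|y/\sigma\|_p^p \le 2$), while $\|\cdot\|_q$ is a quasi-norm with constant $C_q = 2^{1/q-1}$ when $q\le 1$ and $C_q=1$ when $q\ge 1$. By Lemma~\ref{lem:radius} applied to $K=\cEpm$ with $X=\ell_q^m$, it is enough to produce, with high probability over $G_{n,m}$, a map $\varphi\colon\IR^n\to\IR^m$ with
\[
\sup_{x\in\cEpm}\|x-\varphi(G_{n,m}x)\|_q \lesssim_{p,q} \Big(\tfrac{\log(\mathrm em/n)}{n}\Big)^{\lambda+1/p-1/q}.
\]
Since kernels are unchanged under scaling, we work with the normalized matrix $A = n^{-1/2}G_{n,m}$ and take $\varphi=\Delta_p$, the $\ell_p$-minimizer associated to $A$.

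Next I would appeal to the robust null space / RIP based recovery result underlying \cite{FPR+10} (extending Donoho, Cand\`es--Romberg--Tao, and Foucart) in the quasi-Banach regime $0<p\le 1$. Specifically, there is a universal $\delta_\ast\in(0,1)$ so that whenever $\delta_{2s}(A)\le \delta_\ast$ one has, for all $x\in\IR^m$, $1\le s\le m/2$ and $p<q\le 2$,
\[
\|x-\Delta_p(Ax)\|_q \;\lesssim_{p,q}\; s^{1/q-1/p}\,\sigma_s(x)_p.
\]
By Lemma~\ref{lem:gaussian-RIP} there exist constants $C_1,C,\delta_\ast$ such that this RIP hypothesis holds for $A=n^{-1/2}G_{n,m}$ with probability at least $1-2\exp(-Cn)$ provided $n\ge C_1 s\log(\mathrm em/s)$. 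I now choose $s$ as large as this constraint allows: up to constants,
\[
s \;\asymp\; \frac{n}{\log(\mathrm em/n)},
\]
which is a legal choice (and satisfies $s\le m/2$) exactly under the hypothesis $n\ge D\log(\mathrm em/n)$ for a suitable $D$, after absorbing the elementary fact that $\log(\mathrm em/s)\asymp \log(\mathrm em/n)$ for this $s$.

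Finally, Lemma~\ref{lem:nonlin-app} gives $\sup_{x\in\cEpm}\sigma_s(x)_p\lesssim_{p,\lambda}s^{-\lambda}$ uniformly, so plugging this into the recovery bound yields
\[
\sup_{x\in\cEpm}\|x-\Delta_p(Ax)\|_q
\;\lesssim_{p,q,\lambda}\; s^{1/q-1/p-\lambda}
\;\asymp\; \Big(\tfrac{\log(\mathrm em/n)}{n}\Big)^{\lambda+1/p-1/q},
\]
which is the desired estimate (the exponent is positive since $\lambda+1/p-1/q>0$, using $p<q$). Combined with the first step this proves the theorem.

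The main obstacle is the robust recovery inequality for $\ell_p$-minimization with $0<p\le 1$ in the $\ell_q$-norm for $p<q\le 2$; this is not quite the classical Cand\`es--Romberg--Tao statement and requires the quasi-Banach variant that appears in the compressed sensing literature. Once that black box is in place, the remaining ingredients (Lemmas~\ref{lem:radius}, \ref{lem:gaussian-RIP}, \ref{lem:nonlin-app}) fit together in a routine manner. A minor technical point is the bookkeeping of the $\log(\mathrm em/s)\asymp\log(\mathrm em/n)$ equivalence under the stated lower bound on $n$, which just makes the choice of $s$ compatible with the hypothesis of Lemma~\ref{lem:gaussian-RIP}.
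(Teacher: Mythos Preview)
Your proposal is correct and follows essentially the same route as the paper: reduce via Lemma~\ref{lem:radius} to the worst-case error of $\Delta_p$ applied to $n^{-1/2}G_{n,m}$, invoke the RIP-based robust recovery inequality from \cite{FPR+10} (the paper cites their (3.5)--(3.6), which is exactly your black box $\|x-\Delta_p(Ax)\|_q\lesssim_{p,q}s^{1/q-1/p}\sigma_s(x)_p$ under $\delta_{2s}(A)\le\delta_\ast$), pick $s\asymp n/\log(\mathrm em/n)$ via Lemma~\ref{lem:gaussian-RIP}, and finish with Lemma~\ref{lem:nonlin-app}. Your treatment of the quasi-norm constants in Lemma~\ref{lem:radius} and of the bookkeeping $\log(\mathrm em/s)\asymp\log(\mathrm em/n)$ is in fact slightly more explicit than the paper's own write-up.
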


\begin{proof}

Let $m\in\IN$, $1\le n<m$ and $N_n:=n^{-1/2}G_{n,m}$. By Lemma~\ref{lem:radius}, for all realizations,
\begin{equation} \label{eq:radq-upper}
\rad_q(\cEpm,\ker G_{n,m})
\le 2^{1/q} \sup_{x\in \cEpm}\|x-\Delta_p(N_n x)\|_q,
\end{equation}
where we specified $\varphi(y)=\Delta_p(n^{-1/2}y)=$ arg min $\|z\|_p$ subject to $N_n z=n^{-1/2}y$ for $y=G_{n,m} x$. 

We follow the proof of \cite[Theorem 3.2]{FPR+10} in order to obtain an upper bound. To this end, let $D\in (0,\infty)$ be large enough such that
\[
D/2>{\rm e} \quad\text{and}\quad \frac{D/2}{1+\log(D/2)}>C_1,
\]
where $C_1\in(0,\infty)$ is the constant from Lemma~\ref{lem:gaussian-RIP} with $\delta=1/3$. Also choose $s:=\lfloor n/D\log({\rm e}m/n)\rfloor$. Then, if $n\ge D\log({\rm e}m/n)$ it holds that $n>C_1 (2s)\log({\rm e}m/2s)$. By Lemma~\ref{lem:gaussian-RIP} the matrix $N_n$ satisfies $\delta_{2s}(N_n)\le 1/3$ with probability at least $1-2\exp(-C_2 n)$. It follows, see (3.5) and (3.6) in \cite{FPR+10}, that there exists a constant $C\in (0,\infty)$ such that with the same probability,
\[
\sup_{x\in \cEpm }\|x-\Delta_p(N_n x)\|_q
\le C \Big(\frac{\log({\rm e}m/n)}{n}\Big)^{1/p-1/q}\sup_{x\in \cEpm }\sigma_s(x)_p.
\]
With Lemma~\ref{lem:nonlin-app} the proof is complete if we can show that $s\le m/2$. Indeed, since the function $n\mapsto n/\log({\rm e}m/n)$ is increasing for $1\le n\le m$, we have $s\le m/D < m/2{\rm e}$.

\end{proof}

If $n$ is too small for Theorem~\ref{thm:radius-quasi-general} to apply, that is, $n<D\log({\rm e}m/n)$, we only have the trivial pointwise bound 
\[
\rad_q(\cEpm,\ker G_{n,m})
\le \rad_q(\cEpm,\IR^m)
\le \sigma_1
\]
by means of $\|\cdot\|_q\le \|\cdot\|_p$.

\begin{remark}
Let us note that the proof does not work in the case where $p>1$ as can already be seen in \cite[Theorem 3.2]{FPR+10}. Moreover, in the case $p=1$ the bound derived from Theorem \ref{thm:radius-quasi-general} is worse than the bound given by Theorem \ref{thm:mstar-estimate-rad} already for $m\gtrsim n^2$, i.e., for small codimension. Nonetheless, if $m$ is proportional to $n$, the bound from Theorem \ref{thm:radius-quasi-general} improves upon \eqref{eq:bound polynomial} obtained from Theorem \ref{thm:mstar-estimate-rad}.
\end{remark}
\begin{remark}
Theorem~\ref{thm:radius-quasi-general} provides a bound on Gelfand numbers of diagonal operators in the quasi-Banach regime, see Corollary~\ref{cor:gelfand-quasi} in the \hyperref[sec:diag]{Appendix}.
\end{remark}

% % % % % % % % % % % % % % % % % % % % % % % % % % % %
\section{A lower bound -- the case $1< p\le 2$}
\label{sec:lower}
% % % % % % % % % % % % % % % % % % % % % % % % % % % %

We use the following lemma from \cite[Lemma 25]{HKN+2019} to prove the lower bound of Proposition~\ref{pro:lower} for slowly decaying semiaxes in the case of $1< p\le 2$.
\begin{lemma}\label{lem:large-coord}
	For any $\varepsilon\in (0,1)$ it holds that, for all $m\in\IN$ and $1\le n< m$,
	\[
	\IP\Big[\sup\big\{ x_1^2: \norm{x}_2=1, G_{n,m} x=0\big\}\ge 1-\frac{n}{\varepsilon m}\Big]
	\ge 1-\varepsilon.
	\]
\end{lemma}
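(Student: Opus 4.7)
The plan is to reinterpret the random quantity geometrically, reduce to a statement about a uniform random vector on the sphere, and finish with Markov's inequality.

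First, I would observe that for any subspace $E\subset \IR^m$,
\[
\sup\big\{ x_1^2 : \norm{x}_2=1,\ x\in E\big\}
= \norm{P_E e_1}_2^2,
\]
where $P_E$ denotes the orthogonal projection onto $E$, because $x_1 = \langle x,e_1\rangle$ is maximized over $E\cap \mathbb S^{m-1}$ by $P_E e_1/\|P_E e_1\|_2$. Applying this to $E = \ker G_{n,m}$ identifies the random variable of interest with $\norm{P_{\ker G_{n,m}} e_1}_2^2$.

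Next, I would use rotation invariance of the Gaussian matrix $G_{n,m}$: the kernel is Haar-distributed on the Grassmannian $\mathcal G_{m,m-n}$. Consequently, the joint distribution of $(e_1,\ker G_{n,m})$ is the same as that of $(Ue_1, E_0)$ for any fixed $(m-n)$-dimensional subspace $E_0$ and a Haar-random orthogonal $U$, which in turn means $\norm{P_{\ker G_{n,m}}e_1}_2^2$ has the same law as $\sum_{j=1}^{m-n} Y_j^2$, where $Y=(Y_j)_{j=1}^m$ is uniformly distributed on the Euclidean unit sphere $\mathbb S^{m-1}$. Setting $Z := 1-\sum_{j=1}^{m-n}Y_j^2 = \sum_{j=m-n+1}^{m} Y_j^2$, the event of interest becomes
\[
\Big\{Z\le \frac{n}{\varepsilon m}\Big\}.
\]

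The last step is to bound $Z$ above in probability. By symmetry of the uniform distribution on $\mathbb S^{m-1}$, each $\IE Y_j^2$ equals $1/m$, so $\IE Z = n/m$. Markov's inequality then yields
\[
\IP\Big[Z \ge \frac{n}{\varepsilon m}\Big] \le \frac{\IE Z}{n/(\varepsilon m)} = \varepsilon,
\]
which, after unwinding the identifications above, gives the claimed bound.

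The argument is essentially bookkeeping; there is no serious obstacle. The only point requiring care is the distributional identification in the second step, where one must verify that rotation invariance of the Gaussian entries translates to Haar-distributed kernels, and then apply the corresponding invariance of $e_1$ under the Haar measure on $O(m)$ to reduce to a uniform vector on the sphere.
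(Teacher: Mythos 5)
The paper itself does not prove Lemma~\ref{lem:large-coord}; it is imported verbatim from [HKN+2019, Lemma~25]. Your argument is correct and is the standard proof of this fact: identify $\sup\{x_1^2:\|x\|_2=1,\,x\in\ker G_{n,m}\}$ with $\|P_{\ker G_{n,m}}e_1\|_2^2$, reduce by rotation invariance to $\sum_{j=1}^{m-n}Y_j^2$ for $Y$ uniform on $\mathbb S^{m-1}$, and apply Markov's inequality to $Z=\sum_{j=m-n+1}^m Y_j^2$ using $\IE Z=n/m$. One small wording issue: the claim that ``the joint distribution of $(e_1,\ker G_{n,m})$ is the same as that of $(Ue_1,E_0)$'' is not literally true, since the first pair has a deterministic first coordinate and the second pair a deterministic second coordinate. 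What you actually need, and what is true, is the invariance $\|P_{UE_0}e_1\|_2=\|P_{E_0}U^{-1}e_1\|_2$ for every $U\in O(m)$ (because $P_{UE_0}=UP_{E_0}U^{-1}$), combined with $\ker G_{n,m}\stackrel{d}{=}UE_0$ and $U^{-1}e_1\stackrel{d}{=}Y$; this gives the claimed equality in law for the scalar quantity without any assertion about joint laws. Fixing that sentence makes the proof airtight.
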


From this we can now deduce the lower bound as presented in Proposition \ref{pro:lower}. We prove a slightly more general bound holding not just for polynomially decaying semiaxes. Plugging in semiaxes of polynomial decay then proves Proposition \ref{pro:lower}.
\begin{proposition}\label{pro:lower-general}
Let $1< p\le 2$. Then, for any $\varepsilon\in (0,1)$ and all $m\in\IN$ and $1\le n< m$ with $n\le \varepsilon\sigma_m^2 m^{2/p^*}$, we have
	\[
	\IP\Big[\rad(\cEpm,G_{n,m})\ge \frac{\sigma_1}{1+\sigma_1}\Big]
	\ge 1-\varepsilon.
	\]
\end{proposition}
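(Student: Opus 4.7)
The plan is to construct, on the high-probability event of Lemma~\ref{lem:large-coord}, an explicit vector in $\cEpm\cap\ker G_{n,m}$ whose Euclidean norm is at least $\sigma_1/(1+\sigma_1)$. Concretely, Lemma~\ref{lem:large-coord} gives, with probability at least $1-\varepsilon$, a unit vector $x\in\ker G_{n,m}$ with $x_1^2\ge 1-\frac{n}{\varepsilon m}$, so that the tail mass satisfies $\sum_{j=2}^{m}x_j^{2}=1-x_1^{2}\le\frac{n}{\varepsilon m}$. I will then scale $x$ by $t:=\sigma_1/(1+\sigma_1)$ and verify that $y:=tx\in\cEpm$, which immediately forces $\rad(\cEpm,G_{n,m})\ge\|y\|_2=t$.

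The core of the argument is showing $\sum_{j=1}^{m}|x_j/\sigma_j|^p\le(1+1/\sigma_1)^p$. For the first coordinate we use the trivial bound $|x_1/\sigma_1|^p\le 1/\sigma_1^p$. For the tail, since $1<p\le 2$, I apply Hölder's inequality with conjugate exponents $2/p$ and $2/(2-p)$ to the product $|x_j|^p\cdot\sigma_j^{-p}$, obtaining
\[
\sum_{j=2}^{m}\frac{|x_j|^p}{\sigma_j^p}\le\Big(\sum_{j=2}^{m}|x_j|^2\Big)^{p/2}\Big(\sum_{j=2}^{m}\sigma_j^{-2p/(2-p)}\Big)^{(2-p)/2}.
\]
The monotonicity $\sigma_j\ge\sigma_m$ bounds the second factor by $\sigma_m^{-p}\,m^{(2-p)/2}$, while the first factor is at most $(n/\varepsilon m)^{p/2}$. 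Multiplying out and noting $(2-p)/2-p/2=1-p=-p/p^{*}$ yields the clean inequality
\[
\sum_{j=2}^{m}\frac{|x_j|^p}{\sigma_j^p}\le\sigma_m^{-p}\,\varepsilon^{-p/2}\,\bigl(n\,m^{-2/p^{*}}\bigr)^{p/2},
\]
and the hypothesis $n\le\varepsilon\sigma_m^2 m^{2/p^{*}}$ is designed exactly so that the right-hand side equals $1$. Combining with the first-coordinate estimate gives $\sum_j|x_j/\sigma_j|^p\le 1+\sigma_1^{-p}\le(1+\sigma_1^{-1})^p$, where the last step uses the elementary inequality $a^p+b^p\le(a+b)^p$ for $a,b\ge 0$ and $p\ge 1$.

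After rescaling, $y=tx$ satisfies $\sum_j|y_j/\sigma_j|^p=t^p\cdot\sum_j|x_j/\sigma_j|^p\le 1$ with $t=\sigma_1/(1+\sigma_1)$, so $y\in\cEpm\cap\ker G_{n,m}$ and $\|y\|_2=t$, completing the proof on the favorable event from Lemma~\ref{lem:large-coord}.

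The main delicate point is the Hölder step: because $1<p\le 2$ we cannot simply dominate $|x_j|^p$ by $|x_j|^2$ or vice versa, and the exponents must be chosen so that the resulting bound is exactly calibrated to the scale $n\,m^{-2/p^{*}}$ appearing in the hypothesis. Apart from that, the argument is purely deterministic once the event from Lemma~\ref{lem:large-coord} is fixed; the probabilistic content is entirely absorbed into that lemma.
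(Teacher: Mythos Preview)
Your proof is correct and follows essentially the same approach as the paper: both invoke Lemma~\ref{lem:large-coord}, split off the first coordinate, control the tail via H\"older's inequality together with $\sigma_j\ge\sigma_m$, and then rescale by $\sigma_1/(1+\sigma_1)$ into $\cEpm$. The only cosmetic difference is that the paper pulls out $\sigma_m^{-1}$ \emph{before} applying H\"older (so the embedding $\|\cdot\|_p\le m^{1/p-1/2}\|\cdot\|_2$ appears directly and one never writes $\sigma_j^{-2p/(2-p)}$, which at $p=2$ requires the usual $L^\infty$ convention) and works with the $\ell_p$-norm rather than its $p$-th power, but the arithmetic is identical.
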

\begin{proof}
	By Lemma \ref{lem:large-coord}, with probability at least $1-\varepsilon$, we find $x\in \IR^m$ with
	\[
	x_1^2 \ge 1-\frac{n}{\varepsilon m},\quad
	\norm{x}_2=1,\quad\text{and}\quad
	G_{n,m} x=0.
	\]
	We estimate
	\[
		\Big(\sum_{j=1}^{m}\frac{\abs{x_j}^p}{\sigma_j^p}\Big)^{1/p}
		\le \frac{1}{\sigma_1}+ \Big(\sum_{j=2}^{m}\frac{\abs{x_j}^p}{\sigma_j^p}\Big)^{1/p}
		\le \frac{1}{\sigma_1}+ \frac{1}{\sigma_m}\Big(\sum_{j=2}^{m}\abs{x_j}^p\Big)^{1/p}
	\]
	and by means of Hölder's inequality, we obtain
	\[
		\frac{1}{\sigma_m}\Big(\sum_{j=2}^{m}\abs{x_j}^p\Big)^{1/p}
		\le \frac{m^{1/p-1/2}}{\sigma_m}\Big(\sum_{j=2}^{m}x_j^2\Big)^{1/2}
		= \frac{m^{1/p-1/2}}{\sigma_m}\big(1-x_1^2\big)^{1/2}.
	\]
	Since $1-x_1^2\le \frac{n}{\varepsilon m}$, we have
	\[
		\Big(\sum_{j=1}^{m}\frac{\abs{x_j}^p}{\sigma_j^p}\Big)^{1/p}
		\le \frac{1}{\sigma_1}+1
	\]
	if $n\le \varepsilon m^{2p^*}\sigma_m^2$. In this case, we can normalize such that $\tilde{x}:=x/(1+\frac{1}{\sigma_1})$ satisfies
	\[
	\tilde{x}\in \cEpm,\quad
	G_{n,m} \tilde{x}=0,\quad\text{and}\quad
	\norm{\tilde{x}}_2=\frac{\sigma_1}{1+\sigma_1},
	\]
	which completes the proof.
\end{proof}

\section*{Appendix -- Gelfand numbers of diagonal operators, optimal radius, and polynomial semiaxes} %preliminary section
\label{sec:diag}

Let $0< p\le \infty$. We write $\ell_p$ for the space of $p$-summable sequences and denote its (quasi-)norm by $\|\cdot\|_{p}$. 
For $0< p,t\le \infty$, we define a Lorentz (quasi-)norm by
\[
	\norm{x}_{p,t}
	:=
	\norm{j^{1/p-1/t}x_{j}^{*}}_{t},
\]
where $(x_{j}^{*})_{j\in\IN}$ is the non-increasing rearrangement of $(\abs{x_{j}})_{j\in\IN}$ with the convention that $1/\infty:=0$. We write $\ell_{p,t}$ for the space of sequences with finite Lorentz (quasi-)norm $\|\cdot\|_{p,t}$. Note that $\ell_{p,p}=\ell_{p}$ and $\ell_{p,t}\subset \ell_{r,t}$  for every $0< p<r\le \infty$.

Let $0< q\le \infty$ and $\sigma=(\sigma_{j})_{j\in\IN}$ be a non-increasing non-negative sequence, i.e., $\sigma_{1}\ge \sigma_{2}\ge \cdots \ge 0$. To $\sigma$ we can associate the diagonal operator
\[
	D_{\sigma}\colon\ell_{p}\to\ell_{q},
	\quad x=(x_{j})_{j\in\IN}\mapsto (\sigma_{j}x_{j})_{j\in\IN},
\]
which, for any $m\in\IN$, can be considered as an operator from $\ell_p^m$ to $\ell_q^m$. Then the image 
$
		D_{\sigma}(\IB_{p}^{m})=\cE_{p,\sigma}^{m}
$
is an $\ell_p$-ellipsoid. 

Let $1\le n< m$ and consider an information mapping $N_{n}\in\IR^{n\times m}$ with kernel $E_{n}$. A change of variables shows that
\[
	\rad(\cEpm,N_{n})
	= \sup_{x\in \IB_{p}^{m}\cap \ker N_{n}}\norm{D_{\sigma}x}_{2}.
\]
We have that
\begin{equation}\label{eq:radiusgelfand}
	\rad(\cE_{p,\sigma}^{m},n)
	= c_{n+1}(D_{\sigma}:\ell_{p}^{m}\to\ell_{2}^{m})\quad\text{for all }n\in\IN,
\end{equation}
where
\[
	c_{n+1}(D_{\sigma}:\ell_{p}^{m}\to\ell_{q}^{m})
	:= \inf_{E_{n}}\sup_{x\in \IB_{p}^{m}\cap E_{n}}\norm{D_{\sigma}x}_{q}
\]
is the $(n+1)$-st Gelfand number of $D_{\sigma}:\ell_p^m\to \ell_q^m$. Here, the infimum ranges over all subspaces of $\IR^{m}$ with codimension at most $n$. For general background on Gelfand numbers and other $s$-numbers, we refer the reader to \cite{K1986} and \cite{Pie80}.

Although we will need only the case $q=2$, it is natural to state the following result in a more general form, which can be found in \cite[Section 11.11]{Pie80} for $q\ge 1$ but the proof is in fact also valid for all $q>0$.
\begin{proposition}\label{pro:gelfandtail}
Let $0< q \le p\le \infty$ and $\sigma_{1}\ge \sigma_{2}\ge \cdots\ge 0$. Then, for any $1\le n\le m$, we have
\[
	c_{n}(D_{\sigma}\colon\ell_{p}^{m}\to\ell_{q}^{m})=\Big(\sum_{j=n}^{m}\sigma_{j}^{r}\Big)^{1/r},
\]
where $\frac{1}{r}=\frac{1}{q}-\frac{1}{p}$ if $q<p$ and $r=\infty$ if $q=p$.
\end{proposition}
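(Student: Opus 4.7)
The claimed identity is two-sided, and I plan to prove the upper and lower bounds separately. The upper bound is direct, achieved by an explicit ``cut-off'' subspace together with Hölder's inequality, while the lower bound is the main obstacle and where Pietsch's argument \cite[Section~11.11]{Pie80} is essential.

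\emph{Upper bound.} The plan is to exhibit a single codimension-$(n-1)$ subspace on which the $\ell_q$-diameter is controlled by the claimed tail quantity. Take
\[
M_0 := \{x\in\IR^m : x_1 = \cdots = x_{n-1} = 0\},
\]
so $\text{codim}(M_0) = n-1$, and note that any $x\in\IB_p^m\cap M_0$ is supported in $\{n,\ldots,m\}$. In the strict case $q<p$, set $1/r=1/q-1/p$; then $p/q$ and $r/q$ are conjugate exponents, both at least $1$, so Hölder's inequality applied to the product $\sigma_j^q\cdot|x_j|^q$ (summed over $j=n,\ldots,m$) yields
\[
\|D_\sigma x\|_q^q = \sum_{j=n}^m \sigma_j^q|x_j|^q \le \Big(\sum_{j=n}^m \sigma_j^{r}\Big)^{q/r}\|x\|_p^q \le \Big(\sum_{j=n}^m \sigma_j^{r}\Big)^{q/r}.
\]
In the boundary case $q=p$ (so $r=\infty$), monotonicity of $(\sigma_j)$ gives the trivial estimate $\|D_\sigma x\|_p\le \sigma_n\|x\|_p\le \sigma_n$. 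Taking the infimum over all codim-$(n-1)$ subspaces then yields $c_n(D_\sigma)\le (\sum_{j=n}^m\sigma_j^r)^{1/r}$.

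\emph{Lower bound.} This is the main obstacle. Fix an arbitrary subspace $E\subseteq \IR^m$ of codimension at most $n-1$; the goal is to produce $x\in E\cap \IB_p^m$ with $\|D_\sigma x\|_q\ge (\sum_{j=n}^m\sigma_j^r)^{1/r}$. The natural Hölder-equality candidate is the tail vector $\xi\in\IR^m$ with $\xi_j=\sigma_j^{r/p}$ for $j\ge n$ and $\xi_j=0$ otherwise; a direct computation using $q(1+r/p)=r$ gives $\|\xi\|_p=(\sum_{j\ge n}\sigma_j^r)^{1/p}$ and $\|D_\sigma\xi\|_q=(\sum_{j\ge n}\sigma_j^r)^{1/q}$, so $\xi$ realises the ratio $(\sum_{j\ge n}\sigma_j^r)^{1/r}$ on the full ambient space; the difficulty is that in general $\xi\notin E$. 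My plan is to follow Pietsch's strategy: first, in the Banach range $q\ge 1$, use Gelfand--Kolmogorov duality, which identifies $c_n(D_\sigma\colon\ell_p^m\to \ell_q^m)$ with the Kolmogorov number $d_n(D_\sigma\colon\ell_{q^*}^m\to\ell_{p^*}^m)$, and then observe that any $(n-1)$-dimensional approximating subspace in $\ell_{p^*}^m$ must, by dimension counting, fail to approximate the weighted coordinate vectors on the tail $\{n,\ldots,m\}$; quantifying this failure by the same Hölder computation (now played on the dual side with conjugate exponents) yields precisely the lower bound. Second, for $0<q<1$, where duality is no longer available, I will use a monotonicity/limiting argument in $q$: the inclusion $\|y\|_q\ge \|y\|_1$ after a rescaling of the diagonal weights that preserves the quantity $\sum_{j=n}^m\sigma_j^r$ reduces the claim to the already-established Banach case; alternatively one verifies directly, via a Lagrangian analysis on the nonnegative cone (the extremal $x$ may be taken coordinate-wise nonnegative by symmetry of $\IB_p^m$), that the Hölder equality continues to characterise the extremiser in the quasi-Banach range.

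The main obstacle is the quasi-Banach step $0<q<1$: the duality-based proof of Pietsch does not transfer literally, and care is required to check that the reduction to $q=1$ (or the direct minimax on the positive orthant) really does recover the same tail formula $(\sum_{j=n}^m\sigma_j^r)^{1/r}$ without losing constants.
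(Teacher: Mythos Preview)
The paper does not supply its own proof of this proposition; it cites \cite[Section~11.11]{Pie80} for $q\ge 1$ and adds only the remark that ``the proof is in fact also valid for all $q>0$''. That remark is the point of comparison, and it is telling: a proof that extends verbatim to $0<q<1$ cannot hinge on Banach-space duality, since $(\ell_q^m)^*$ is not $\ell_{q^*}^m$ in that range. So your lower-bound strategy already departs from the route the paper points to. Your upper bound is correct and is the standard cut-off/H\"older argument; your Gelfand--Kolmogorov duality argument is a legitimate alternative in the Banach range $1\le q\le p$.

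The genuine gap is the quasi-Banach step. The monotonicity $\|y\|_q\ge\|y\|_1$ for $0<q<1$ yields only
\[
c_n(D_\sigma\colon\ell_p^m\to\ell_q^m)\ \ge\ c_n(D_\sigma\colon\ell_p^m\to\ell_1^m)\ =\ \Big(\sum_{j=n}^m\sigma_j^{r'}\Big)^{1/r'},\qquad \tfrac{1}{r'}=1-\tfrac{1}{p},
\]
which is strictly weaker than the claimed $(\sum_{j=n}^m\sigma_j^{r})^{1/r}$ because $1/r=1/q-1/p>1/r'$, hence $\|\,\cdot\,\|_r\ge\|\,\cdot\,\|_{r'}$. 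A ``rescaling of the diagonal weights'' does not repair this: replacing $\sigma$ by some $\tau$ changes the operator, and there is no factorisation that converts an $\ell_1$-target lower bound for $D_\tau$ into the $\ell_q$-target lower bound for $D_\sigma$ with the correct exponent. Worse, if $p<1$ as well (which the proposition allows), the intermediate case $q=1$ falls outside the range $q\le p$, your duality step was never available for $\ell_p$ with $p<1$, and the reduction does not get off the ground. The alternative ``Lagrangian analysis on the nonnegative cone'' is only a hint: the subspace constraint $x\in E$ is linear in $x$, not in $|x|^q$, so the problem is not convex and you have not indicated how to carry out the optimisation.

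The paper's remark indicates that Pietsch's actual argument for the lower bound is a direct linear-algebra construction inside an arbitrary codimension-$(n{-}1)$ subspace that never invokes the triangle inequality on the target side, and hence goes through unchanged for all $0<q\le p$. I would recommend consulting that argument rather than trying to force the duality route through the quasi-Banach range.
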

In addition to Proposition~\ref{pro:gelfandtail}, we have for all $0< p,q\le \infty$ that
\[
	c_{1}(D_{\sigma}:\ell_{p}^{m}\to\ell_{q}^{m})
	= \norm{D_{\sigma}:\ell_{p}^{m}\to\ell_{q}^{m}}
	= \Big(\sum_{j=1}^{m}\sigma_{j}^{r}\Big)^{1/r},
\]
where $\frac{1}{r}=(\frac{1}{q}-\frac{1}{p})_{+}$. This shows that $\norm{\sigma}_{r}<\infty$ is necessary to ensure that the operators $D_{\sigma}:\ell_{p}^{m}\to\ell_{q}^{m}, m\in\IN,$ are uniformly bounded. All of the above extends to the infinite-dimensional case in a canonical way. We state a result taken from Buchmann \cite{Buc99}, where one implication goes back to Linde \cite[Theorem 5]{Lin85}. 
\begin{proposition}\label{pro:lorentzeq}
Let $1\le p,q\le \infty$ and $r>0$ with $\frac{1}{r}>(\frac{1}{q}-\frac{1}{p})_{+}$ as well as $0< t\le\infty$. Then
\[
	\sigma\in\ell_{r,t}
	\quad\Leftrightarrow\quad \big(c_{n}(D_{\sigma}:\ell_{p}\to\ell_{q})\big)_{n\in\IN}\in \ell_{u,t},
\]
where 
\begin{enumerate}
	\item  if $1\le q\le p\le\infty$, then $\frac{1}{u}=\frac{1}{r}+\frac{1}{p}-\frac{1}{q}$,
	\item if $1\le p<q\le 2$, then $\frac{1}{u}=
		\begin{cases}
			\frac{p^*}{2r}&:\frac{1}{r}<\frac{1}{p^*}\frac{1/p-1/q}{1/p-1/2},\\
			\frac{1}{r}	+\frac{1}{p}-\frac{1}{q}&:\frac{1}{r}>\frac{1}{p^*}\frac{1/p-1/q}{1/p-1/2},
		\end{cases}$
	\item if $1\le p<2<q\le \infty$, then $\frac{1}{u}=
		\begin{cases}
			\frac{p^*}{2r}&:\frac{1}{r}<\frac{1}{p^*},\\
			\frac{1}{r}	+\frac{1}{p}-\frac{1}{q}&:\frac{1}{r}>\frac{1}{p^*},
		\end{cases}$
	\item if $2\le p<q\le\infty$, then $\frac{1}{u}=\frac{1}{r} $.
\end{enumerate}
\end{proposition}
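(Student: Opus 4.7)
The plan is to reduce the infinite-dimensional question about $D_\sigma\colon\ell_p\to\ell_q$ to the well-studied finite-dimensional Gelfand numbers $c_n(\mathrm{id}\colon\ell_p^m\to\ell_q^m)$, for which sharp asymptotics are classical (Pietsch, Kashin, Garnaev--Gluskin and their refinements). The common thread, following Pietsch's approach, is a dyadic decomposition: writing $\IN=\bigsqcup_k B_k$ with $B_k=\{2^k,\dots,2^{k+1}-1\}$, the operator splits as $D_\sigma=\sum_k D_{\sigma^{(k)}}$ where $\sigma^{(k)}$ is the restriction of $\sigma$ to $B_k$. Subadditivity of Gelfand numbers together with the estimate
\[
c_n\bigl(D_{\sigma^{(k)}}\colon\ell_p\to\ell_q\bigr)\le \sigma_{2^k}\,c_n\bigl(\mathrm{id}\colon\ell_p^{|B_k|}\to\ell_q^{|B_k|}\bigr)
\]
then reduces everything to allocating measurements $n_k$ to each block and optimising, after which the $\ell_{u,t}$-norm of $(c_n(D_\sigma))$ can be computed from the dyadic sum.

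Case (i), where $q\le p$, is the easy case: Proposition~\ref{pro:gelfandtail} gives the exact tail formula $c_n(D_\sigma\colon\ell_p^m\to\ell_q^m)=(\sum_{j\ge n}\sigma_j^s)^{1/s}$ with $1/s=1/q-1/p$, and a standard Hardy-type equivalence for monotone sequences in Lorentz spaces, namely $(T_n)\in\ell_{u,t}\Leftrightarrow \sigma\in\ell_{r,t}$ whenever $1/u=1/r-1/s>0$, yields the claim with $1/u=1/r+1/p-1/q$. No further work is required.

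For the cases $p<q$, the key finite-dimensional input is the two-regime asymptotic of $c_n(\mathrm{id}\colon\ell_p^m\to\ell_q^m)$: when $p<2$ there is a Kashin-type plateau $c_n\asymp 1$ for $n\lesssim m/p^*$ followed by polynomial decay, while when $p\ge 2$ only a single rate occurs. Plugging these into the dyadic sum and balancing $n_k$ across blocks produces two competing upper bounds for $(c_n(D_\sigma))$: a ``trivial'' summation bound matching case (i), and a Kashin-driven bound of the form $\sigma_n\cdot n^{-1/p^*}$ that dominates once $\sigma$ decays slowly enough. The threshold between the two regimes is precisely $1/r = \frac{1}{p^*}\cdot\frac{1/p-1/q}{1/p-1/2}$ in case (ii) and $1/r=1/p^*$ in case (iii), matching the statement; in case (iv) only one regime is present and the estimate simplifies. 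The main obstacle is the matching \emph{lower bound} in the Kashin regime of cases (ii)--(iii): one needs to show that for each $k$ a restriction-and-projection argument combined with the nontrivial lower bound $c_n(\mathrm{id}\colon\ell_p^{|B_k|}\to\ell_q^{|B_k|})\gtrsim 1$ for $n\lesssim |B_k|/p^*$ yields $c_n(D_\sigma\colon\ell_p\to\ell_q)\gtrsim \sigma_{2^k}$ for such $n$, and then to assemble these pointwise lower bounds into a sharp $\ell_{u,t}$-norm lower bound by a careful dyadic summation that exploits the monotonicity of $\sigma$. This last step is the technical heart of the argument and the place where the critical exponent $1/p^*$ appears intrinsically.
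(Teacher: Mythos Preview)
The paper does not prove this proposition at all: it is quoted verbatim from Buchmann \cite{Buc99} (with one implication attributed to Linde \cite{Lin85}), and no argument is supplied. So there is no ``paper's own proof'' to compare your proposal against.

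That said, your outline is the standard route taken in the cited references and in Pietsch's programme: dyadic blocking of $\sigma$, reduction to the finite-dimensional identities $c_n(\mathrm{id}\colon\ell_p^m\to\ell_q^m)$ via the additivity rule $c_{m+n-1}(S+T)\le c_m(S)+c_n(T)$, and then a Hardy-type equivalence for Lorentz norms of monotone sequences. Case~(i) is indeed immediate from Proposition~\ref{pro:gelfandtail}. For cases~(ii)--(iv) your description of the two competing regimes and the resulting thresholds is accurate, and the lower-bound mechanism you name (restricting to a single block $B_k$ and invoking $c_n(\mathrm{id}\colon\ell_p^{|B_k|}\to\ell_q^{|B_k|})\gtrsim 1$ in the plateau range) is exactly what is used in the literature. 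What you have written is a correct high-level plan rather than a proof; the genuinely technical step you flag---turning the blockwise lower bounds into a sharp $\ell_{u,t}$ lower bound in the slow-decay regime---does require care (this is where Linde's argument enters), but you have identified it correctly and there is no conceptual gap in the strategy.
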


By means of \eqref{eq:radiusgelfand}, Propositions \ref{pro:gelfandtail} and \ref{pro:lorentzeq} apply to the radius of optimal information. Note that some cases are missing, for example if $q=2$, there is a gap for $\frac{1}{r}=\frac{1}{p^*}$. In this case, we can deduce from an infinite-dimensional version of Theorem~\ref{thm:mstar-estimate-rad} the following corollary. 

\begin{corollary}\label{cor:minrad}
For all $n\in\IN$,
\[
c_n(D_{\sigma}:\ell_p\to\ell_2)
	\lesssim
	\begin{cases}
	n^{-1/2}\sup\limits_{k\le j\le \infty}\sigma_{j}\sqrt{\log(j)+1} &:\,p=1\\
	\sqrt{p^*}n^{-1/2} \Big(\sum\limits_{j=k}^{\infty}\sigma_{j}^{p^{*}}\Big)^{1/p^{*}} &:\,1< p\le \infty,
	\end{cases}
\]
where $k\asymp \frac{n}{p^*}$ for $p>1$, while $k\asymp n$ for $p=1$. In particular, $c_{n}(D_{\sigma}:\ell_{p}\to\ell_{2})\in \ell_{2,\infty}$ if $1<p\le \infty$ and $\sigma\in \ell_{p^*}$.
\end{corollary}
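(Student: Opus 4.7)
The plan is to reduce the infinite-dimensional Gelfand number to the finite-dimensional radius controlled by Theorem~\ref{thm:mstar-estimate-rad}, via a truncation argument combined with subadditivity of Gelfand numbers. For each $m>n$, split $D_\sigma\colon\ell_p\to\ell_2$ as $D_\sigma=D_\sigma^{(m)}+R_m$, where $D_\sigma^{(m)}$ has diagonal $(\sigma_1,\dots,\sigma_m,0,0,\dots)$ and $R_m$ has diagonal $(0,\dots,0,\sigma_{m+1},\sigma_{m+2},\dots)$. Using the additivity bound $c_{n+1}(S+T)\le c_{n+1}(S)+c_1(T)$ for Gelfand numbers, one obtains
\[
c_{n+1}(D_\sigma\colon\ell_p\to\ell_2)
\;\le\; c_{n+1}(D_\sigma^{(m)}\colon\ell_p\to\ell_2)\;+\;\|R_m\colon\ell_p\to\ell_2\|.
\]
By the formula for the operator norm of a diagonal operator recorded after Proposition~\ref{pro:gelfandtail}, the tail term equals $\sigma_{m+1}$ for $1\le p\le 2$ and $\bigl(\sum_{j>m}\sigma_j^{r}\bigr)^{1/r}$ with $\tfrac1r=\tfrac12-\tfrac1p$ for $p>2$. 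In both cases this tail norm tends to $0$ as $m\to\infty$ whenever the right-hand side of the corollary is finite: the case $1\le p\le 2$ needs only $\sigma_j\to 0$, while for $p>2$ one has $p^*<r$ and so $\ell_{p^*}\subset\ell_r$.

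Next, pulling back codimension-$n$ subspaces of $\IR^m$ to $\ell_p$ via the coordinate projection $P_m$ shows $c_{n+1}(D_\sigma^{(m)}\colon\ell_p\to\ell_2)\le c_{n+1}(D_\sigma\colon\ell_p^m\to\ell_2^m)=\rad(\cEpm,n)$, the last equality being \eqref{eq:radiusgelfand}. Theorem~\ref{thm:mstar-estimate-rad} provides a random codimension-$n$ subspace of $\IR^m$ whose section radius satisfies the stated bound with probability at least $1-c_1\exp(-c_2n)$; for $n$ above an absolute threshold this probability is positive, so at least one realization exists and hence $\rad(\cEpm,n)$ is deterministically dominated by the same bound. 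Crucially, the cutoff index $k\asymp n/p^*$ (resp.\ $k\asymp n$ for $p=1$) is independent of $m$, and the partial sum $\sum_{j=k}^{m}\sigma_j^{p^*}$ is monotone in $m$ and bounded above by $\sum_{j=k}^{\infty}\sigma_j^{p^*}$ (the analogous monotonicity holds for the supremum appearing in the $p=1$ case). Letting $m\to\infty$ and using $\|R_m\|\to 0$ therefore yields the asserted bound on $c_{n+1}(D_\sigma\colon\ell_p\to\ell_2)$, which is equivalent to the stated bound on $c_n$ after the harmless shift $n\mapsto n-1$ absorbed into the constants.

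The finitely many small values of $n$ for which Theorem~\ref{thm:mstar-estimate-rad} fails to guarantee a deterministic realization are handled by the trivial estimate $c_n\le\|D_\sigma\colon\ell_p\to\ell_2\|$ and absorbed into the $\lesssim$-constant. Finally, the ``in particular'' statement is immediate: if $\sigma\in\ell_{p^*}$ then $\bigl(\sum_{j\ge k}\sigma_j^{p^*}\bigr)^{1/p^*}\le\|\sigma\|_{p^*}$ uniformly in $n$, so the displayed bound specializes to $c_n(D_\sigma\colon\ell_p\to\ell_2)\lesssim \sqrt{p^*}\,\|\sigma\|_{p^*}\,n^{-1/2}$, which is precisely the statement $(c_n)_{n\in\IN}\in\ell_{2,\infty}$. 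The main point of care in the argument is the truncation step, where one must verify both that the tail operator norm tends to zero under the stated summability of $\sigma$ and that the bound of Theorem~\ref{thm:mstar-estimate-rad} is in fact uniform in the truncation dimension $m$; once these are secured, the passage from the finite-dimensional probabilistic estimate to the infinite-dimensional deterministic estimate is routine.
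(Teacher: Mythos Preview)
Your proof is correct and follows essentially the same route as the paper's: truncate $D_\sigma$ to its first $m$ coordinates, identify the finite-dimensional Gelfand number with $\rad(\cEpm,n)$ via \eqref{eq:radiusgelfand}, invoke Theorem~\ref{thm:mstar-estimate-rad} to bound the latter, and let $m\to\infty$ using that the tail operator norm vanishes. The paper phrases the truncation step as the two-sided continuity bound $|c_n(D_\sigma^m)-c_n(D_\sigma)|\le\|D_\sigma-D_\sigma^m\|$ rather than your one-sided additivity inequality, and is terser about extracting a deterministic subspace from the probabilistic statement of Theorem~\ref{thm:mstar-estimate-rad}; your version makes these points explicit but is otherwise the same argument.
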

\begin{proof}
	We use \eqref{eq:radiusgelfand} to state Theorem~\ref{thm:mstar-estimate-rad} for Gelfand numbers. Then, for each $m\in\IN$, let $D_{\sigma}^m$ be the restriction of the operator $D_{\sigma}$ to the first $m$ coordinates. This gives 
\[
c_n(D_{\sigma}:\ell_p^m\to\ell_q^m)
=c_n(D_{\sigma}^m:\ell_p\to\ell_q),
\]
and further, by continuity and H\"older's inequality,
\[
|c_n(D_{\sigma}^m:\ell_p\to\ell_q)-c_n(D_{\sigma}:\ell_{p}\to\ell_{q})|
\le \Big(\sum_{j=m+1}^{\infty}\sigma_j^s\Big)^{1/s},
\]
with $\frac{1}{s}=(\frac{1}{2}-\frac{1}{p})_+$. Letting $m\to\infty$ for each $n\in\IN$ completes the proof.
\end{proof}

Let us note that bounding the Gelfand numbers of operators into $\ell_2$ via $M^*$-estimates has been done before, e.g., in \cite{PT1986}.

To the best of our knowledge, for $0<p<1$ or $0<q<1$ the asymptotic behavior of Gelfand numbers of diagonal operators is unknown. At least for the case of polynomial sequences, we can deduce the following result from Theorem~\ref{thm:radius-quasi-general} and the analogue of \eqref{eq:radiusgelfand} for $0<q<2$. 
\begin{corollary}\label{cor:gelfand-quasi}
Let $0<p\leq 1$ and $p<q\le 2$. Assume that $\sigma_j=j^{-\lambda}$, $j\in\IN$, for some $\lambda>0$. Then there exist constants $C,D\in(0,\infty)$ such that, for all $m\in\IN$ and all $1\le n <m$ with
\[
n\ge D\log({\rm e}m/n),
\]
we have
\[
c_n(D_{\sigma}:\ell_p^m\to \ell_q^m)
\lesssim_{p,q} \Big(\frac{\log({\rm e}m/n)}{n}\Big)^{\lambda+1/p-1/q}.
\]
\end{corollary}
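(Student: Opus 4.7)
The plan is to combine the standard change-of-variables identification between Gelfand numbers of the diagonal operator $D_\sigma$ and radii of sections of the ellipsoid $\cEpm$ with a probabilistic existence argument based on Theorem~\ref{thm:radius-quasi-general}: if a random subspace of suitable codimension satisfies the desired upper bound with strictly positive probability, then the infimum defining the Gelfand number is controlled by the same quantity. No convexity of $\cEpm$ is needed here, because the Gelfand numbers are defined directly as infima over subspace radii, regardless of whether $p\ge 1$ or $0<p<1$.

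First, I would record the analogue of \eqref{eq:radiusgelfand} for general $0<q\le 2$. For any $N\in \IR^{k\times m}$ the bijection $y=D_\sigma x$ (well defined since $\sigma_j>0$) yields
\[
\sup_{x\in \IB_p^m\cap \ker N}\norm{D_\sigma x}_q \;=\; \rad_q\bigl(\cEpm,\ker(N D_\sigma^{-1})\bigr),
\]
and the subspaces $\ker(N D_\sigma^{-1})$ range over all subspaces of $\IR^m$ with codimension at most $k$ as $N$ does. Taking infima gives the identity
\[
c_{k+1}(D_\sigma:\ell_p^m\to \ell_q^m) \;=\; \inf_{E:\,\operatorname{codim} E\le k}\rad_q(\cEpm,E).
\]

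Second, I would apply Theorem~\ref{thm:radius-quasi-general} at codimension $n-1$ (to match the Gelfand index $c_n$). Choosing the constant $D$ in the corollary large enough that the hypothesis $n\ge D\log({\rm e}m/n)$ forces the hypothesis of Theorem~\ref{thm:radius-quasi-general} at level $n-1$, the random subspace $\ker G_{n-1,m}$ satisfies
\[
\rad_q(\cEpm,\ker G_{n-1,m}) \;\lesssim_{p,q}\; \Bigl(\frac{\log({\rm e}m/(n-1))}{n-1}\Bigr)^{\lambda+1/p-1/q}
\]
with probability at least $1-2\exp(-C(n-1))>0$. Since this event is non-empty, there exists a deterministic realization, i.e., a subspace of codimension at most $n-1$, attaining the bound; by the identity above, $c_n(D_\sigma:\ell_p^m\to \ell_q^m)$ is bounded by the same quantity. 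In the relevant regime $n\ge D$ we have $\log({\rm e}m/(n-1))/(n-1)\asymp \log({\rm e}m/n)/n$, so the cosmetic difference is absorbed into the suppressed constants.

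The content is entirely in Theorem~\ref{thm:radius-quasi-general}, so there is essentially no technical obstacle. The only small points to watch are the index shift between $c_n$ and $c_{n+1}$ (which forces one to apply the theorem at codimension $n-1$ and to verify that the hypothesis on $n$ is preserved after adjusting $D$), and checking that the change-of-variables identity and the probabilistic existence argument go through unchanged in the quasi-Banach regime $0<p\le 1$, where only the definition of the Gelfand number as an infimum is used.
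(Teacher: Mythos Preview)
Your proposal is correct and is exactly the argument the paper has in mind: the corollary is stated without proof and is said to follow from Theorem~\ref{thm:radius-quasi-general} together with the $\ell_q$-analogue of \eqref{eq:radiusgelfand}, which is precisely your change-of-variables identity plus the probabilistic existence step. Your handling of the index shift $c_n$ versus $c_{n+1}$ by applying the theorem at codimension $n-1$ and enlarging $D$ is a clean way to make the otherwise implicit argument precise.
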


To study the decay of Gelfand numbers of diagonal operators arising from a polynomially decaying sequence, the concept of a diagonal limit order has been introduced by Pietsch (see, e.g., \cite[6.2.5.3]{Pie07}). The definition of decay given in Section~\ref{sec:poly} is basically a finite-dimensional analogue of it. As a corollary to Proposition~\ref{pro:lorentzeq}, we have the following result.

\begin{corollary}\label{cor:min-order}
Let $1\le p\le \infty$. If $\sigma_j=j^{-\lambda}$, $j\in\IN$, for some $\lambda>(\frac{1}{2}-\frac{1}{p})_+$, then
\begin{equation*}
\decay(\rad(\cEpm,n))
=	\begin{cases}
		\lambda\cdot\frac{p^{*}}{2}&:\, 1\le p<2 \text{ and }\lambda<\frac{1}{p^{*}},\\
		\lambda+\frac{1}{p}-\frac{1}{2}&:\,\text{otherwise.}
	\end{cases}
\end{equation*}
\end{corollary}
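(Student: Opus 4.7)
The plan is to reduce the claim to the asymptotic decay of the Gelfand numbers of $D_\sigma$ acting between infinite-dimensional sequence spaces and then to read off the exact rate from Proposition~\ref{pro:lorentzeq}. By \eqref{eq:radiusgelfand}, $\rad(\cEpm,n)=c_{n+1}(D_\sigma\colon\ell_p^m\to\ell_2^m)$; these finite-dimensional Gelfand numbers are non-decreasing in $m$ for fixed $n$ (any codimension-$(n-1)$ subspace of $\IR^{m+1}$ restricts to one of codimension at most $n-1$ in $\IR^m$, which is admissible for the smaller problem) and their supremum over $m$ equals $c_{n+1}^\infty:=c_{n+1}(D_\sigma\colon\ell_p\to\ell_2)$. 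Consequently the uniform-in-$m$ requirement in the definition of $\decay$ reduces to a polynomial bound on the single infinite-dimensional sequence $(c_n^\infty)_n$, i.e., $\decay(\rad(\cEpm,n))=\sup\{\rho\ge 0:c_n^\infty\lesssim n^{-\rho}\}$, which is the reciprocal of the infimum of all $u>0$ with $(c_n^\infty)_n\in\ell_{u,\infty}$.

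Next I would apply Proposition~\ref{pro:lorentzeq} with $q=2$, $t=\infty$ and $r=1/\lambda$. For $\sigma_j=j^{-\lambda}$ one has the sharp membership $\sigma\in\ell_{1/\lambda,\infty}$ while $\sigma\notin\ell_{r',\infty}$ for any $r'<1/\lambda$, and the assumption $\lambda>(1/2-1/p)_+$ is exactly the hypothesis $1/r>(1/q-1/p)_+$. For $p\ge 2$, case (i) yields directly $1/u=\lambda+1/p-1/2$. For $1\le p<2$, since $(1/p-1/q)/(1/p-1/2)=1$, case (ii) places the transition at $\lambda=1/p^*$ and gives $1/u=p^*\lambda/2$ for $\lambda<1/p^*$ and $1/u=\lambda+1/p-1/2$ for $\lambda>1/p^*$. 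The ``only if'' direction of Proposition~\ref{pro:lorentzeq} supplies the upper bound $c_n^\infty\lesssim n^{-1/u}$, while the ``if'' direction, used contrapositively, rules out faster decay: any bound $c_n^\infty\lesssim n^{-\rho}$ with $\rho>1/u$ would, via the inverse of the formula relating $r$ and $u$, force $\sigma\in\ell_{r',\infty}$ with $1/r'>\lambda$, contradicting $\sigma_j=j^{-\lambda}$.

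Finally I would handle the borderline $\lambda=1/p^*$ for $1\le p<2$, which Proposition~\ref{pro:lorentzeq} does not cover with equality, but where both formulas evaluate to $1/u=1/2$. Applying the proposition at any $r>p^*$ (still inside case (ii) part 1) yields $c_n^\infty\lesssim n^{-\rho}$ for every $\rho<1/2$, hence $\decay\ge 1/2$. Conversely, if the decay rate exceeded $1/2$, then $(c_n^\infty)_n\in\ell_{u,\infty}$ for some $u<2$; by the ``if'' direction in case (ii) part 2 this would force $\sigma\in\ell_{r,\infty}$ with $1/r=1/u+1/2-1/p>1/p^*$, contradicting $\sigma_j=j^{-1/p^*}$. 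Hence $\decay=1/2$, matching the ``otherwise'' branch of the corollary. The main subtlety lies precisely in this boundary argument and in the careful bookkeeping of both directions of Proposition~\ref{pro:lorentzeq}; the reduction to infinite dimensions and the plugging-in of $r=1/\lambda$ are routine.
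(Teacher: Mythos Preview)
Your proof is correct and follows essentially the same strategy as the paper: pass via \eqref{eq:radiusgelfand} to the infinite-dimensional Gelfand numbers of $D_\sigma\colon\ell_p\to\ell_2$ and extract both inequalities for the decay rate from the two directions of Proposition~\ref{pro:lorentzeq}. Your use of $t=\infty$ throughout (avoiding the paper's detour through $0<t<\infty$ and a logarithmic argument) and your explicit treatment of the borderline $\lambda=1/p^*$ are in fact tidier than the paper's version; the only point to supplement is that the equality $\sup_m c_{n+1}(D_\sigma\colon\ell_p^m\to\ell_2^m)=c_{n+1}(D_\sigma\colon\ell_p\to\ell_2)$ does not follow from monotonicity alone but uses the continuity estimate given in the proof of Corollary~\ref{cor:minrad}.
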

For $p\ge 2$ this also follows from Proposition~\ref{pro:gelfandtail} showing that $\rad(\cEpm,n) \lesssim_{p,\lambda} n^{-\lambda+1/2-1/p}$ for all $m>n$ with a matching lower bound for $m$, say, larger than $2n$.
\begin{proof}
	We only prove the first case since the other case is analogous. To show that $\decay(\rad(\cEpm,n))\ge \lambda p^*/2$, it is sufficient by \eqref{eq:radiusgelfand} to find $C\in(0,\infty)$ such that, for all $\varrho<\lambda p^*/2$ large enough,
	\begin{equation}\label{eq:decay}
	c_{n,m}
	:=c_{n}(D_{\sigma}:\ell_{p}^{m}\to\ell_{q}^{m})
	\le C n^{-\varrho}\quad \text{for all }m\in\IN \text{ and }1\le n\le m.
\end{equation}
	This is satisfied if the sequence of Gelfand numbers $c_{n}(D_{\sigma}:\ell_{p}\to\ell_{q})\geq c_{n,m}$ ($n\in\IN$) belongs to $\ell_{u,\infty}$ with $u=1/\varrho$. By Proposition~\ref{pro:lorentzeq}, this holds if $\sigma\in \ell_{r,\infty}$ with a certain $r>1/\lambda$, which is true by assumption. 

	For the other inequality we assume that \eqref{eq:decay} holds for some $\varrho>\lambda p^*/2$. Choosing $m$ large enough compared to $n$, see the proof of Corollary~\ref{cor:minrad}, we deduce from \eqref{eq:decay} that for some $\varrho>\lambda p^*/2$ and every $n\in\IN$, 
	\begin{equation}\label{eq:decay-inf}
		c_n
	:=c_{n}(D_{\sigma}:\ell_{p}\to\ell_{q})
	\le 2C n^{-\varrho}.
	\end{equation}
	For every $r< 1/\lambda$ and $0<t<\infty$, it follows from $\sigma\not\in \ell_{r,t}$ and Proposition~\ref{pro:lorentzeq} that
	\[
	\sum_{n=1}^{\infty}\frac{1}{n}c_{n}^t n^{tp^*/2r}=\infty,
	\]
	which implies $c_n n^{p^*/2r}\log^{2/t} n\to \infty$, contradicting \eqref{eq:decay-inf}.
\end{proof}

\subsection*{Acknowledgement}

AH and MS were supported by the Austrian Science Fund (FWF) through project F5513-N26, which is part of the Special Research Program \emph{Quasi-Monte Carlo Methods: Theory and Applications}. This research was funded in whole, or in part, by the Austrian Science Fund (FWF), Project P34808. 
JP is supported by the Austrian Science Fund (FWF) Project P32405 \textit{Asymptotic Geometric Analysis and Applications}. For the purpose of open access, the authors have applied a CC BY public copyright license to any Author Accepted Manuscript arising from this submission.

\bibliographystyle{plain}
\bibliography{genrandomsec}

\end{document}